\def\fnum@figure{Fig.\thefigure}
\newtheorem{theorem}{Theorem}
\theoremstyle{definition}
\newtheorem{remark}{Remark}
\numberwithin{remark}{section}
\numberwithin{theorem}{section}
\numberwithin{equation}{section}
\begin{document}
\title{A note of ``pointwise estimates of the SDFEM for convection--diffusion problems with characteristic layers''}

\author{%
Jin Zhang\thanks{Email: JinZhangalex@hotmail.com}
\footnote{Address:School of Science, Xi'an Jiaotong University,
Xi'an, 710049, China}}
\maketitle
\begin{abstract}
We propose some useful estimates for the pointwise error estimates of the streamline
diffusion finite element method (SDFEM) on Shishkin meshes, when SDFEM is applied for problems of characteristic layers.
\end{abstract}
\section{Problem}
We consider the singularly perturbed boundary value problem
\begin{equation}\label{problem}
 \begin{array}{rl}
-\varepsilon\Delta u+bu_{x}+cu=f & \text{in $\Omega=(0,1)^{2}$},\\
 u=0 & \text{on $\partial\Omega$}
 \end{array}
\end{equation}
where $b$, $c>0$ are constants and $b\ge\beta$ on $\overline{\Omega}$ with a positive constant $\beta$. It is assumed that $f$ is sufficiently smooth.
Here $0<\varepsilon\ll 1$  is a small perturbation
parameter whose presence gives rise to an exponential layer of width
$O(\varepsilon)$ near the outflow boundary at $x=1$ and to two characteristic (or parabolic) layers of width $O(\sqrt{\varepsilon})$ near the characteristic boundaries at $y=0$
and $y=1$.
\section{The SDFEM on Shishkin meshes}
In this Section we describe our mesh, our finite element method and the assumptions of our analysis.
\subsection{The regularity result}
As mentioned before the solution $u$ of \eqref{problem} possesses an exponential layer at $x=1$ and two characteristic layers at $y=0$ and $y=1$. For our later analysis we shall suppose
that $u$ can be split into a regular solution component and
various layer parts:
\newtheorem{assumption}{Assumption}[section]
\begin{assumption}
The solution $u$ of \eqref{problem} can be decomposed as
\begin{subequations}\label{Assumption 1}
\begin{equation}\label{eq:(2.1a)}
u=S+E_{1}+E_{2}+E_{12},
\end{equation}
where for all $\boldsymbol{x}=(x,y)\in\bar{\Omega}$ and for $0\le i+j \le 3$, the regular part satisfies
\begin{equation}\label{eq:(2.1b)}
\left|\frac{\partial^{i+j}S}{\partial x^{i}\partial y^{j}}(x,y) \right|\le C,
\end{equation}
while for the layer terms and $0\le i+j \le 3$, the following bounds hold true\upshape{:}
\begin{equation}\label{eq:(2.1c)}
\left|\frac{\partial^{i+j}E_{1}}{\partial x^{i}\partial y^{j}}(x,y) \right|\le C\varepsilon^{-i}e^{-\beta(1-x)/\varepsilon},
\end{equation}
\begin{equation}\label{eq:(2.1d)}
\left|\frac{\partial^{i+j}E_{2}}{\partial x^{i}\partial y^{j}}(x,y) \right|\le C\varepsilon^{-j/2}(e^{-y/\sqrt{\varepsilon}}+e^{-(1-y)/\sqrt{\varepsilon}}),
\end{equation}
and
\begin{equation}\label{eq:(2.1e)}
\quad\quad\quad
\left|\frac{\partial^{i+j}E_{12}}{\partial x^{i}\partial y^{j}}(x,y) \right|\le C\varepsilon^{-(i+j/2)}e^{-\beta(1-x)/\varepsilon}
(e^{-y/\sqrt{\varepsilon}}+e^{-(1-y)/\sqrt{\varepsilon}}).
\end{equation}
\end{subequations}
\end{assumption}
\setcounter{equation}{1}

\par
For constant coefficients Kellogg and Stynes \cite{Kell1Styn2:2005-Corner,Kell1Styn2:2007-Sharpened} give sufficient compatibility conditions on the data that ensure
the existence of \eqref{eq:(2.1a)}--\eqref{eq:(2.1e)}.

\subsection{Shishkin meshes}
 When discretizing \eqref{problem}, we use a piecewise uniform mesh --- a so-called
 \textit{Shishkin} mesh ---with $N$ mesh intervals in both $x-$ and $y-$ direction which
 condenses in the layer regions. For this purpose we define the two mesh transition parameters
\begin{equation*}
\lambda_{x}:=\min\left\{ \frac{1}{2},\rho\frac{\varepsilon}{\beta}\ln N \right\} \quad \mbox{and} \quad
\lambda_{y}:=\min\left\{
\frac{1}{4},\rho\sqrt{\varepsilon}\ln N \right\}.
\end{equation*}
In this paper, we define $\rho=2.5$.
\begin{assumption}\label{Assumption 2}
We assume in our analysis that $\varepsilon\le N^{-1}$. Furthermore we assume that
$\lambda_{x}=\rho\varepsilon\beta^{-1}\ln N$ and $\lambda_{y}=\rho\sqrt{\varepsilon}\ln N$ as otherwise $N^{-1}$ is exponentially small compared with $\varepsilon$.
\end{assumption}
\par
The domain $\Omega$ is dissected into four(six) parts as $\Omega=\Omega_{s}\cup\Omega_{1}\cup\Omega_{2}\cup\Omega_{12}$, where
\begin{align*}
&\Omega_{s}:=\left[0,1-\lambda_{x}\right]\times\left[\lambda_{y},1-\lambda_{y}\right],&&
\Omega_{2}:=\left[0,1-\lambda_{x}\right]\times\left(\left[0,\lambda_{y}\right]
\cup\left[1-\lambda_{y},1\right]
\right),\\
&\Omega_{1}:=\left[ 1-\lambda_{x},1 \right]\times\left[\lambda_{y},1-\lambda_{y}\right],&&
\Omega_{12}:=\left[ 1-\lambda_{x},1 \right]\times\left(\left[0,\lambda_{y}\right]
\cup\left[1-\lambda_{y},1\right]
\right).
\end{align*}
\begin{remark}
The mesh transition parameters have been chosen such that the boundary layer function $E$ which can be any of $E_{1},E_{2}$ and $E_{12}$ satisfies $\left| E\right|\le CN^{-\rho}$ on $\Omega_{s}$.
\end{remark}

\par
We introduce the set of mesh points $\left\{ (x_{i},y_{j})\in\Omega:i,\,j=0,\,\cdots,\,N  \right\}$ defined by
\begin{equation*}
x_{i}=
\left\{
\begin{array}{ll}
2i(1-\lambda_{x})/N ,&\text{for $i=0,\,\cdots,\,N/2$},\nonumber \\
1-2(N-i)\lambda_{x}/N, &\text{for $i=N/2+1,\,\cdots,\,N$}\nonumber
\end{array}
\right.
\end{equation*}
and
\begin{equation*}
y_{j}=
\left\{
\begin{array}{ll}
3j\lambda_{y}/N ,&\text{for $j=0,\,\cdots,\,N/3$}, \nonumber \\
(3j/N-1)-3(2j-N)\lambda_{y}/N ,&\text{for $j=N/3+1,\,\cdots,\,2N/3$},\nonumber\\
1-3(N-j)\lambda_{y}/N, &\text{for $j=2N/3+1,\,\cdots,\,N$}.\nonumber
\end{array}
\right.
\end{equation*}
By drawing lines through these mesh points parallel to the $x$-axis and $y$-axis the domain $\Omega$ is partitioned into rectangles. This triangulation is denoted by $\Omega^{N}$. If $D$ is a mesh subdomain of $\Omega$, we write $D^{N}$ for the triangulation of $D$. The mesh sizes $h_{x,\tau}=x_{i}-x_{i-1}$ and $h_{y,\tau}=y_{j}-y_{j-1}$ satisfy
\begin{numcases}{h_{x,\tau}=}
H_{x}:=\frac{1-\lambda_{x}}{N/2}, &\text{for $i=1,\,\cdots,\,N/2$}, \nonumber\\
h_{x}:=\frac{\lambda_{x}}{N/2}, &\text{for $i=N/2+1,\,\cdots,\,N$}, \nonumber
\end{numcases}
\begin{numcases}{h_{y,\tau}=}
H_{y}:=\frac{1-2\lambda_{y}}{N/3}, &\text{for $j=N/3+1,\,\cdots,\,2N/3$},\nonumber \\
h_{y}:=\frac{\lambda_{y}}{N/3}, &\text{otherwise}\nonumber
\end{numcases}
and
\begin{align*}
N^{-1}\le H_{x},H_{y} \le 3N^{-1} ,\\
C_{1}\varepsilon N^{-1}\ln N \le h_{x}\le C_{2}\varepsilon N^{-1}\ln N,\\
C_{1}\sqrt{\varepsilon} N^{-1}\ln N \le h_{y}\le C_{2}\sqrt{\varepsilon} N^{-1}\ln N.
\end{align*}
 The above properties are essential when inverse inequalities
are applied in our later analysis.
\begin{figure}
\centering
\includegraphics[width=0.6\textwidth]{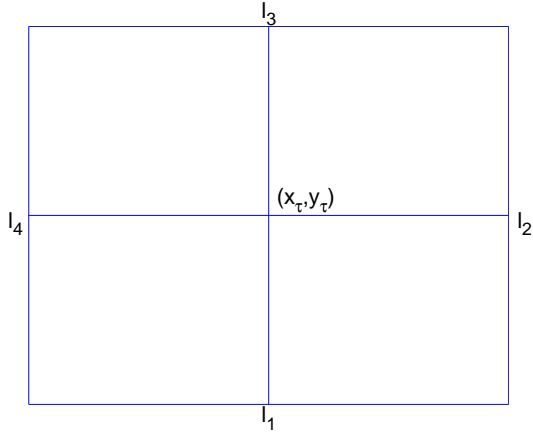}
\caption{\footnotesize{Geometry of the element $\tau$ }}
\label{fig:2}
\end{figure}
\par
For the mesh elements we shall use some notations: $\tau_{ij}=[x_{i-1},x_{i}]\times [y_{j-1},y_{j}]$ for a specific element, $\tau$ for a generic mesh rectangle (see Fig.\ref{fig:2}) and
\begin{equation*}
x_{\tau}=(x_{i-1}+x_{i})/2,\quad y_{\tau}=(y_{j-1}+y_{j})/2
\quad\mbox{if}\quad\tau=[x_{i-1},x_{i}]\times [y_{j-1},y_{j}].
\end{equation*}

\subsection{The streamline diffusion finite element method}

The weak formulation of the
 problem \eqref{problem} is: Find $u\in H^{1}_{0}(\Omega)$ such that
\begin{equation}\label{weak formulation}
\varepsilon (\nabla u,\nabla v)+(bu_{x}+cu,v)=(f,v),\,\forall v\in H^{1}_{0}(\Omega).
\end{equation}
Note that the variational formulation \eqref{weak formulation} has a unique solution by means of
the Lax-Milgram Lemma.
\par
On the above Shishkin mesh we define a finite element space
\begin{equation*}
V^{N}:=\{v^{N}\in C(\bar{\Omega}):v^{N}|_{\partial\Omega}=0
 \text{ and $v^{N}|_{\tau}$ is bilinear, }
  \forall\tau\in\Omega^{N} \}.
\end{equation*}
Then we can state the standard Galerkin discretisation of \eqref{weak formulation} is: Find $U\in V^{N}$ such that
\begin{equation}\label{Galerkin}
\varepsilon (\nabla U,\nabla v^{N})+(bU_{x}+cU,v^{N})=(f,v^{N}),\,\forall v^{N}\in V^{N}.
\end{equation}

The SDFEM adds weighted residuals to the standard Galerkin finite
element method :
Find $U\in V^{N}$ such that
\begin{equation}\label{SDFEM}
B(U,v^{N})=(f,v^{N}+\delta bv^{N}_{x}),\,\forall v^{N}\in V^{N},
\end{equation}
where
\begin{equation}\label{bilinear form}
B(U,v^{N}):=\varepsilon(\nabla U,\nabla v^{N})+(bU_{x}+cU,v^{N})
+(bU_{x}+cU,\delta bv^{N}_{x}).
\end{equation}
The term $(-\varepsilon\Delta U,\delta bv^{N}_{x})$ is
neglected in our case. $\delta=\delta(\boldsymbol{x})$ is a user-chosen parameter (see \cite{Johnson:1987-Numerical,Roo1Sty2Tob3:2008-Robust}).
In this paper, we set
\begin{numcases}{\delta(\boldsymbol{x}):=}
C^{\ast}N^{-1},&\text{if $\boldsymbol{x}\in\Omega_{s}\cup\Omega_{2}$},\nonumber\\
0,&\text{otherwise}\nonumber
\end{numcases}
where $C^{\ast}$ is $O(1)$ and $N$ satisfies the relation $0<C^{\ast}N^{-1}\le 1/c$ for $\boldsymbol{x}\in\Omega_{s}\cup\Omega_{2}$ (see \cite[\S III 3.2.1]{Roo1Sty2Tob3:2008-Robust}).
Finally, we define a special energy norm associated with $B(\cdot,\cdot)$:
\begin{equation}\label{energy norm}
\vert\vert\vert U \vert\vert\vert^{2}:=
((\varepsilon+b^{2}\delta)U_{x},U_{x})+\varepsilon(U_{y},U_{y})+(cU,U).
\end{equation}

\par
For any subdomain $D$ of $\Omega$, let $B_{D}(\cdot,\cdot),\,(\cdot,\cdot)_{D}$ and $\vert\vert\vert\cdot\vert\vert\vert_{D}$ mean that the integrations in \eqref{bilinear form} and \eqref{energy norm} are restricted to $D$. We denote by $\Vert\cdot\Vert_{D}$ the $L^{2}$ norm in $L^{2}(D)$, i.e.,
\begin{equation*}
\Vert v\Vert^{2}_{D}=(v,v)_{D} \text{  for all } v\in L^{2}(D).
\end{equation*}
If $D=\Omega$ then we drop $\Omega$ from the notation.

%
%
\section{Interpolation error estimates}
We start our analysis by quoting some previous results. In the following analysis, we shall frequently use the bilinear interpolation $g^{I}$ of a given function $g$.
\newtheorem{lemma}{Lemma}[section]
\begin{lemma}\label{interpolation}
Assume that $u$ satisfies Assumption $2.1$. Then  on our Shishkin mesh
\begin{numcases}{\Vert u-u^{I} \Vert_{L^{\infty}(\tau)}\le}
CN^{-2},&\text{if $\tau\in\Omega_{s}$,}\nonumber\\
CN^{-2}\ln^{2}N, &\text{otherwise.}\nonumber
\end{numcases}
\end{lemma}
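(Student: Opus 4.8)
The plan is to exploit the solution decomposition of Assumption 2.1 together with the linearity of the nodal bilinear interpolation operator, writing
\[
u-u^{I}=(S-S^{I})+(E_{1}-E_{1}^{I})+(E_{2}-E_{2}^{I})+(E_{12}-E_{12}^{I}),
\]
and estimating each of the four pieces separately on each of the four mesh subdomains. Two elementary local bounds do all the work. First, the standard anisotropic pointwise estimate for bilinear interpolation on a rectangle $\tau$,
\[
\Vert g-g^{I}\Vert_{L^{\infty}(\tau)}\le C\left(h_{x,\tau}^{2}\left\Vert\frac{\partial^{2}g}{\partial x^{2}}\right\Vert_{L^{\infty}(\tau)}+h_{y,\tau}^{2}\left\Vert\frac{\partial^{2}g}{\partial y^{2}}\right\Vert_{L^{\infty}(\tau)}\right),
\]
which follows from the tensor-product splitting $g-g^{I}=(I-\Pi_{x})g+\Pi_{x}(I-\Pi_{y})g$ into one-dimensional linear interpolation errors, no mixed-derivative term surviving because the bilinears reproduce $xy$. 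Second, the crude stability bound $\Vert g-g^{I}\Vert_{L^{\infty}(\tau)}\le C\Vert g\Vert_{L^{\infty}(\tau)}$, valid since $g^{I}$ is a convex combination of nodal values of $g$; I will apply this to any layer component on a subdomain where that component is exponentially small.

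On $\Omega_{s}$ the mesh is coarse in both directions, so $h_{x,\tau}=H_{x}\le 3N^{-1}$ and $h_{y,\tau}=H_{y}\le 3N^{-1}$. For the regular part I insert $|S_{xx}|,|S_{yy}|\le C$ from \eqref{eq:(2.1b)} into the first estimate to obtain $\Vert S-S^{I}\Vert_{L^{\infty}(\tau)}\le CN^{-2}$. For each layer component I instead use the stability bound together with the Remark: on $\Omega_{s}$ every layer function is bounded by $CN^{-\rho}=CN^{-2.5}\le CN^{-2}$. This gives the first case of the lemma.

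On the three remaining subdomains the argument is identical in spirit, but the bookkeeping must track, component by component, whether a given layer is \emph{active} (its fast variable is being resolved by a fine mesh) or \emph{dormant} (exponentially small there). For an active component I use the first estimate and rely on the cancellation built into the Shishkin widths: in $x$ one has $h_{x}\le C_{2}\varepsilon N^{-1}\ln N$, which exactly absorbs the factor $\varepsilon^{-2}$ produced by $\partial_{x}^{2}E_{1}$ or $\partial_{x}^{2}E_{12}$ in \eqref{eq:(2.1c)} and \eqref{eq:(2.1e)}, leaving $h_{x}^{2}\varepsilon^{-2}\le CN^{-2}\ln^{2}N$; likewise $h_{y}\le C_{2}\sqrt{\varepsilon}N^{-1}\ln N$ gives $h_{y}^{2}\varepsilon^{-1}\le CN^{-2}\ln^{2}N$, controlling $\partial_{y}^{2}E_{2}$ and $\partial_{y}^{2}E_{12}$ in \eqref{eq:(2.1d)} and \eqref{eq:(2.1e)}. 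For a dormant component I again invoke the stability bound and the $N^{-\rho}$ smallness coming from $y\ge\lambda_{y}$ (hence $e^{-y/\sqrt{\varepsilon}}+e^{-(1-y)/\sqrt{\varepsilon}}\le CN^{-\rho}$) or from $x\le 1-\lambda_{x}$ (hence $e^{-\beta(1-x)/\varepsilon}\le CN^{-\rho}$). Collecting the worst contribution on each of $\Omega_{1}$, $\Omega_{2}$ and $\Omega_{12}$ yields $CN^{-2}\ln^{2}N$, i.e. the second case.

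The main obstacle is not any single estimate but the systematic case analysis: there are sixteen (subdomain, component) pairs, and for each one must decide which of the two local bounds to use and then check that the powers of $\varepsilon$, $N$ and $\ln N$ combine correctly. The essential point to verify is that the derivative blow-up $\varepsilon^{-i}$, $\varepsilon^{-j/2}$ of the layer terms in \eqref{eq:(2.1c)}--\eqref{eq:(2.1e)} is in every case either matched by a fine mesh width or defeated by exponential decay, and is never left uncompensated. Once the mesh-size bounds and the choice $\rho=2.5>2$ are in hand, each of the sixteen cases reduces to a short computation.
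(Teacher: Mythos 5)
Your proposal is correct, and it is essentially the same argument as the one behind the paper's proof: the paper itself only cites \cite[Theorem 3]{Fra1Lin2Roo3:2008-Superconvergence}, and the proof there is exactly your scheme---split $u-u^{I}$ via Assumption 2.1, use the anisotropic bound $\Vert g-g^{I}\Vert_{L^{\infty}(\tau)}\le C(h_{x,\tau}^{2}\Vert g_{xx}\Vert_{L^{\infty}(\tau)}+h_{y,\tau}^{2}\Vert g_{yy}\Vert_{L^{\infty}(\tau)})$ for the components resolved by the fine mesh, and $L^{\infty}$-stability of bilinear interpolation plus the $N^{-\rho}$ smallness (with $\rho=2.5>2$) for the exponentially small ones. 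All sixteen (subdomain, component) cases check out as you describe; the only cosmetic slip is your justification of the absence of a mixed-derivative term, which follows from the splitting $g-g^{I}=(I-\Pi_{x})g+\Pi_{x}(I-\Pi_{y})g$ together with the $L^{\infty}$-stability of $\Pi_{x}$ rather than from reproduction of $xy$.
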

\begin{proof}
See \cite[Theorem 3]{Fra1Lin2Roo3:2008-Superconvergence}.
\end{proof}

\begin{lemma}\label{Lin identities}
For any function $g\in C^{3}(\tau)$ and any $w\in V^{N}$, we have the identities
\begin{subequations}
\begin{align}
\int_{\tau}(g-g^{I})_{x}w_{x}\mathrm{d}x\mathrm{d}y=&
\int_{\tau}g_{xyy}J_{\tau}(y) \left(
w_{x}-\frac{2}{3}(y-y_{\tau})w_{xy}
\right)\mathrm{d}x\mathrm{d}y,\\
\int_{\tau}(g-g^{I})_{y}w_{y}\mathrm{d}x\mathrm{d}y=&
\int_{\tau}g_{xxy}F_{\tau}(x) \left(
w_{y}-\frac{2}{3}(x-x_{\tau})w_{xy}
\right)\mathrm{d}x\mathrm{d}y
\end{align}
and
\begin{equation}
\int_{\tau}(g-g^{I})_{x}w\mathrm{d}x\mathrm{d}y=
\int_{\tau}R(g,w)\mathrm{d}x\mathrm{d}y+
\frac{h^{2}_{x,\tau}}{12}\left(\int_{l_{2}}-\int_{l_{4}}\right)
g_{xx}w\mathrm{d}y
\end{equation}
\end{subequations}
where
\begin{equation*}
F_{\tau}(x)=\frac{1}{2}\left(
(x-x_{\tau})^{2}-h^{2}_{x,\tau}/4
 \right)
 \quad \mbox{and} \quad
J_{\tau}(y)=\frac{1}{2}\left(
(y-y_{\tau})^{2}-h^{2}_{y,\tau}/4
 \right)
\end{equation*}
and
\begin{align*}
R(g,w)=&\frac{1}{3}F_{\tau}(x)(x-x_{\tau})g_{xxx}w_{x}-\frac{h^{2}_{x,\tau}}{12}g_{xxx}w
+J_{\tau}(y)g_{xyy}\\
&\left[
w-(x-x_{\tau})w_{x}-\frac{2}{3}(y-y_{\tau})w_{y}+\frac{2}{3}(x-x_{\tau})
(y-y_{\tau})w_{xy}
\right].
\end{align*}
\end{lemma}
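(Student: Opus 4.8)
The plan is to reduce each identity to one-dimensional interpolation-error identities, exploiting two structural features. First, since $w\in V^{N}$ is bilinear on $\tau$, one has $w_{xx}=w_{yy}=0$ and $w_{xy}=\text{const}$; in particular $w_{x}$ is independent of $x$ and affine in $y$, and $w_{y}$ is independent of $y$ and affine in $x$. Second, if $I_{x}$ and $I_{y}$ denote the one-dimensional linear interpolation operators in the $x$- and $y$-directions, then $g^{I}=I_{x}I_{y}g=I_{y}I_{x}g$, while $I_{x}$ commutes with $\partial_{y}$ and $I_{y}$ with $\partial_{x}$. The basic tool is the elementary one-dimensional identity: if $e=\phi-I\phi$ is a linear interpolation error on an interval, so that $e$ vanishes at the two endpoints, then $\int e\,q=\int \phi''\,Q$ for any weight $q$, where $Q$ is the double antiderivative of $q$ that vanishes at both endpoints. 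For $q\equiv1$ this yields the bubbles $F_{\tau}$ and $J_{\tau}$ (note $F_{\tau}''=J_{\tau}''=1$ and that $F_{\tau},J_{\tau}$ vanish on the respective element edges), and for an affine weight $Q$ is a bubble times a linear factor; this is precisely the mechanism that produces the combinations $w_{x}-\tfrac{2}{3}(y-y_{\tau})w_{xy}$ and $w_{y}-\tfrac{2}{3}(x-x_{\tau})w_{xy}$ appearing in $(a)$ and $(b)$.

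For identity $(a)$ I would write $\partial_{x}(g-g^{I})=(g_{x}-I_{y}g_{x})+\partial_{x}I_{y}(g-I_{x}g)$ and pair it with $w_{x}$. The term $I_{y}(g-I_{x}g)$ vanishes on the vertical edges $x=x_{i-1},x_{i}$ (because $g-I_{x}g$ does and $I_{y}$ preserves this), so, as $w_{x}$ is constant in $x$, its $x$-derivative integrates to zero over each element; the remaining term $\int_{\tau}(g_{x}-I_{y}g_{x})\,w_{x}$ is handled by the one-dimensional identity in $y$ with $\phi=g_{x}$ and $q=w_{x}$, whose double antiderivative is exactly $J_{\tau}\bigl(w_{x}-\tfrac{2}{3}(y-y_{\tau})w_{xy}\bigr)$. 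This reproduces the right-hand side of $(a)$; identity $(b)$ follows verbatim after swapping $x\leftrightarrow y$.

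Identity $(c)$ is the laborious one and is where I expect the real work. Here I would use the splitting $\partial_{x}(g-g^{I})=(g-I_{x}g)_{x}+\partial_{x}I_{x}(g-I_{y}g)$ and pair it with $w$. Integrating the first (pure-$x$) part by parts in $x$ and invoking the one-dimensional identity reduces it to $-\int_{\tau}F_{\tau}g_{xx}w_{x}$; a second integration by parts in $x$, using an antiderivative $P$ of $F_{\tau}$, raises the order to $g_{xxx}$ and generates boundary contributions on the vertical edges. Choosing the integration constant so that $P=\tfrac{1}{3}(x-x_{\tau})F_{\tau}-\tfrac{h_{x,\tau}^{2}}{12}(x-x_{\tau})$ and regrouping via $w_{x}(x-x_{\tau})=w-w(x_{\tau},\cdot)$ yields the $g_{xxx}$ terms of $R$. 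The second (mixed) part collapses, after the $x$-integration, to $\int_{\tau}(g_{x}-I_{y}g_{x})\,w(x_{\tau},y)$, and the one-dimensional identity in $y$ turns it into the $J_{\tau}g_{xyy}[\cdots]$ term of $R$, the full bracket emerging once one substitutes $w-(x-x_{\tau})w_{x}=w(x_{\tau},\cdot)$ and $w_{y}=w_{y}(x_{\tau})+w_{xy}(x-x_{\tau})$, which cancels the spurious $w_{xy}$ contribution.

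The main obstacle is the bookkeeping of the boundary terms in $(c)$. After the second integration by parts one is left with edge integrals carrying $g_{xx}$ and the factor $w_{x}$, together with a leftover bulk term $\tfrac{h_{x,\tau}^{2}}{12}g_{xxx}w(x_{\tau},\cdot)$. The delicate point is to recombine these: writing $g_{xx}(x_{i},\cdot)-g_{xx}(x_{i-1},\cdot)=\int_{x_{i-1}}^{x_{i}}g_{xxx}\,\mathrm{d}x$ converts the leftover bulk term into the ``difference'' part of the edge integral, while expanding $w(x_{i},\cdot)$ and $w(x_{i-1},\cdot)$ about $x_{\tau}$ shows that the $w_{x}$-edge terms supply the ``average'' part, so that everything assembles into the single surviving line integral $\tfrac{h_{x,\tau}^{2}}{12}\bigl(\int_{l_{2}}-\int_{l_{4}}\bigr)g_{xx}w\,\mathrm{d}y$. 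Verifying that no boundary contribution is left unaccounted for is the crux of the argument.
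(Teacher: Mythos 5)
Your proof is correct: the decompositions $g-g^{I}=(g-I_{y}g)+I_{y}(g-I_{x}g)$ and $g-g^{I}=(g-I_{x}g)+I_{x}(g-I_{y}g)$, the double antiderivatives $J_{\tau}\bigl(w_{x}-\tfrac{2}{3}(y-y_{\tau})w_{xy}\bigr)$ and $J_{\tau}\bigl(w(x_{\tau},\cdot)-\tfrac{2}{3}(y-y_{\tau})w_{y}(x_{\tau})\bigr)$, the antiderivative $P=\tfrac{1}{3}(x-x_{\tau})F_{\tau}-\tfrac{h_{x,\tau}^{2}}{12}(x-x_{\tau})$ with $P'=F_{\tau}$ and $P(x_{i})=-P(x_{i-1})=-h_{x,\tau}^{3}/24$, and the reassembly of the leftover bulk term and the $w_{x}$-edge terms into $\tfrac{h_{x,\tau}^{2}}{12}\bigl(\int_{l_{2}}-\int_{l_{4}}\bigr)g_{xx}w\,\mathrm{d}y$ all check out. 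The paper itself gives no proof of this lemma, deferring entirely to Lin, Lin--Yan and the appendix of Zhang, and your bubble-function/integration-by-parts derivation is essentially the standard argument used in those references, here reconstructed in self-contained form.
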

\begin{proof}
See \cite{Lin:1991-rectangle,Lin1Yan2:1996-Construction} or \cite[the Appendix]{Zhang:2003-Finite} for details.
\end{proof}

\begin{lemma}\label{uI U energy estimate}
Let $U$ be the solution of \eqref{SDFEM} on our Shishkin mesh. Then
\begin{equation*}
\vert\vert\vert u^{I}-U \vert\vert\vert
\le
C(N^{-2}\ln^{2}N+\varepsilon^{1/4}N^{-1}\ln^{1/2}N)
\end{equation*}
\end{lemma}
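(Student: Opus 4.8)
The plan is to combine a coercivity bound for $B$ in the energy norm with a near-orthogonality relation between $u^{I}-U$ and the interpolation error, and then to estimate the resulting bilinear form termwise using Lemma~\ref{interpolation} and Lemma~\ref{Lin identities}. First I would establish the coercivity $B(v^{N},v^{N})\ge\tfrac12\vert\vert\vert v^{N}\vert\vert\vert^{2}$ for all $v^{N}\in V^{N}$. Expanding $B(v^{N},v^{N})$ and using that $b$ is constant and $v^{N}$ vanishes on $\partial\Omega$ gives $(bv^{N}_{x},v^{N})=0$, so that $B(v^{N},v^{N})=\varepsilon\Vert\nabla v^{N}\Vert^{2}+c\Vert v^{N}\Vert^{2}+b^{2}(\delta v^{N}_{x},v^{N}_{x})+bc(\delta v^{N},v^{N}_{x})$. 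The cross term is absorbed by Young's inequality, $bc(\delta v^{N},v^{N}_{x})\ge-\tfrac{b^{2}}2(\delta v^{N}_{x},v^{N}_{x})-\tfrac{c^{2}}2(\delta v^{N},v^{N})$, and the pointwise condition $c\delta\le1$ (which is exactly $0<C^{\ast}N^{-1}\le 1/c$) yields $c-\tfrac{c^{2}}2\delta\ge\tfrac c2$; combining these gives the coercivity with the correct weights.

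Next I set $\chi:=u^{I}-U\in V^{N}$ and $\eta:=u-u^{I}$. Testing the weak form \eqref{weak formulation} and the definition \eqref{SDFEM} with $\chi$ and using the strong form $-\varepsilon\Delta u+bu_{x}+cu=f$ in \eqref{problem} gives the error relation $B(\chi,\chi)=-B(\eta,\chi)+\varepsilon(\Delta u,\delta b\chi_{x})$, where the last term is the consistency defect created by dropping $(-\varepsilon\Delta U,\delta b v^{N}_{x})$. By the coercivity it then suffices to bound $|B(\eta,\chi)|$ and $|\varepsilon(\Delta u,\delta b\chi_{x})|$ by $C(N^{-2}\ln^{2}N+\varepsilon^{1/4}N^{-1}\ln^{1/2}N)\vert\vert\vert\chi\vert\vert\vert$ and divide by $\vert\vert\vert\chi\vert\vert\vert$.

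The bulk of the work is the six terms of $B(\eta,\chi)=\varepsilon(\eta_{x},\chi_{x})+\varepsilon(\eta_{y},\chi_{y})+(b\eta_{x},\chi)+(c\eta,\chi)+b^{2}(\delta\eta_{x},\chi_{x})+bc(\delta\eta,\chi_{x})$. For the two diffusion terms and the convection term I would invoke Lemma~\ref{Lin identities} with $g=u$ to trade each interpolation factor for a mesh weight $J_{\tau},F_{\tau}=O(h_{y,\tau}^{2}),O(h_{x,\tau}^{2})$ times a third derivative of $u$, then pass the resulting $\chi_{xy}$ to $\chi_{x}$ or $\chi_{y}$ by the inverse inequalities permitted by the stated mesh-size relations, and finally pair against the weights $\sqrt{\varepsilon+b^{2}\delta}\,\chi_{x}$, $\sqrt\varepsilon\,\chi_{y}$, $\sqrt c\,\chi$ hidden in $\vert\vert\vert\chi\vert\vert\vert$. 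The remaining terms $(c\eta,\chi),b^{2}(\delta\eta_{x},\chi_{x}),bc(\delta\eta,\chi_{x})$ are treated directly from Lemma~\ref{interpolation} together with $\delta=O(N^{-1})$. All estimates are carried out separately on $\Omega_{s},\Omega_{1},\Omega_{2},\Omega_{12}$ using \eqref{eq:(2.1a)}: on $\Omega_{s}$ the layer factors are $O(N^{-\rho})$ and the smooth part gives $N^{-2}$, whereas the fine mesh on $\Omega_{1},\Omega_{2},\Omega_{12}$ turns the large derivatives of $E_{1},E_{2},E_{12}$ into the $N^{-2}\ln^{2}N$ and $\varepsilon^{1/4}N^{-1}\ln^{1/2}N$ contributions, the latter reflecting the characteristic-layer integrals $\int e^{-2y/\sqrt\varepsilon}\mathrm{d}y=O(\sqrt\varepsilon)$.

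The step I expect to be the main obstacle is the consistency term $\varepsilon(\Delta u,\delta b\chi_{x})$ in the characteristic layer, since a crude Cauchy--Schwarz estimate of it against $\vert\vert\vert\chi\vert\vert\vert$ is too weak. Here I would exploit that $\delta$ is constant on $\Omega_{s}\cup\Omega_{2}=[0,1-\lambda_{x}]\times[0,1]$ and that $\chi_{x}$ vanishes on $y=0,1$, and integrate the singular part $\varepsilon u_{yy}$ (with $\varepsilon E_{2,yy}=O(e^{-y/\sqrt\varepsilon})$) by parts to move a derivative off the layer factor and onto $\chi$, leaving a volume remainder of size $O(\varepsilon^{1/4}\delta)\vert\vert\vert\chi\vert\vert\vert=O(\varepsilon^{1/4}N^{-1})\vert\vert\vert\chi\vert\vert\vert$. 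The genuinely delicate contributions are then the line integrals along the interface $x=1-\lambda_{x}$ and the inter-element edge terms $\tfrac{h_{x,\tau}^{2}}{12}(\int_{l_{2}}-\int_{l_{4}})g_{xx}w$ produced by Lemma~\ref{Lin identities}; controlling them requires sharp trace estimates for $\chi$ on mesh lines combined with the $\sqrt\varepsilon$ gain from the layer, and it is precisely here that the final powers of $\ln N$ and $\varepsilon$ are determined.
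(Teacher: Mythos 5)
Your outline is correct and is in substance the proof the paper invokes, since the paper's own ``proof'' is only the citation \cite[Theorem 5]{Fra1Lin2Roo3:2008-Superconvergence}: the coercivity $B(v^{N},v^{N})\ge\tfrac12\vert\vert\vert v^{N}\vert\vert\vert^{2}$ under $c\delta\le 1$, the consistency-corrected orthogonality $B(\chi,\chi)=-B(\eta,\chi)+\varepsilon(\Delta u,\delta b\chi_{x})$ with $\chi=u^{I}-U$, and the termwise treatment via Lemma~\ref{Lin identities} on the fine regions together with the $N^{-\rho}$-smallness of the layer parts on $\Omega_{s}$ are exactly the ingredients of that supercloseness argument, including the correct identification of $\varepsilon^{1/4}N^{-1}$ as coming from $\int e^{-2y/\sqrt\varepsilon}\,\mathrm{d}y=O(\sqrt\varepsilon)$ in $\Omega_{2}$. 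One repair to your consistency-term sketch: the efficient move is integration by parts in $x$ (as this paper itself does for the analogous pointwise term in Theorem~\ref{estimate secondorder}, writing $(\Delta(S+E_{2}),\delta bG_{x})$ as interface terms minus $((\Delta(S+E_{2}))_{x},\delta bG)$, which costs nothing because $x$-derivatives of $S$ and $E_{2}$ are harmless by \eqref{eq:(2.1b)} and \eqref{eq:(2.1d)}), whereas your single integration by parts in $y$ leaves $\varepsilon\bigl((E_{2})_{y},\delta b\chi_{xy}\bigr)$, which under the inverse estimate $\Vert\chi_{xy}\Vert\le Ch_{y}^{-1}\Vert\chi_{x}\Vert$ on $\Omega_{2}$ yields only $O\bigl(\varepsilon^{1/4}N^{1/2}(\ln N)^{-1}\bigr)\vert\vert\vert\chi\vert\vert\vert$ and must be rescued by a further integration by parts in $x$.
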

\begin{proof}
See \cite[Theorem 5]{Fra1Lin2Roo3:2008-Superconvergence}.
\end{proof}
Throughout the remaining analysis we shall make frequent use of the following inverse
estimates. Let $\chi$ be a polynomial on the mesh rectangle $\tau$. Then
\begin{align}
\Vert \chi_{x} \Vert_{L^{p}(\tau)} \le Ch^{-1}_{x,\tau}\Vert \chi \Vert_{L^{p}(\tau)},
\quad
\Vert \chi_{y} \Vert_{L^{p}(\tau)} \le Ch^{-1}_{y,\tau}\Vert \chi \Vert_{L^{p}(\tau)},\label{eq:(3.3)}\\
\int^{y_{j}}_{y_{j-1}}|\chi(x_{i},y)|\mathrm{d}y
\le Ch^{-1}_{x,\tau} \Vert \chi \Vert_{L^{1}(\tau_{ij})},\label{eq:(3.4)}\\
\Vert \chi \Vert_{L^{q}(\tau)}
\le
C(h_{x,\tau}h_{y,\tau})^{1/q-1/p} \Vert \chi \Vert_{L^{p}(\tau)}
\quad \text{for} \quad p,q\in [1,\infty]\label{eq:(3.5)}.
\end{align}
\begin{lemma}\label{subdomain estimates}
Let Assumption $2.1$ hold true. Then there exists a constant $C$ such that the following interpolation error estimates hold true
\begin{subequations}\label{eq:(3.2)}\begin{align}
 &\Vert E-E^{I} \Vert_{L^{\infty}(\Omega_{s})}\le
 CN^{-\rho},&
 &\Vert E_{1}-E^{I}_{1} \Vert_{L^{\infty}(\Omega_{2})}
 \le  CN^{-\rho},\label{eq:(3.2a)}\\
&\Vert E_{12}-E^{I}_{12} \Vert_{L^{\infty}(\Omega_{2})}
 \le  CN^{-\rho},&
&\Vert (E_{12}-E^{I}_{12})_{x} \Vert_{L^{\infty}(\Omega_{1})}
\le  C\varepsilon^{-1}N^{-\rho},\label{eq:(3.2b)}\\
&\Vert (E_{1}-E^{I}_{1})_{y} \Vert_{L^{\infty}(\Omega_{2})}
\le  CN^{-\rho},&
&\Vert (E_{12}-E^{I}_{12})_{y} \Vert_{L^{\infty}(\Omega_{2})}
\le  C\varepsilon^{-1/2}N^{-\rho},\label{eq:(3.2c)}\\
&\Vert (E_{2}-E^{I}_{2})_{x} \Vert_{L^{\infty}(\Omega_{1})}
\le  CN^{-\rho},&
 &\Vert (E_{12}-E^{I}_{12})_{x} \Vert_{L^{1}(\Omega_{2})}
 \le  C\varepsilon^{1/2} N^{-\rho},\label{eq:(3.2d)}\\
&\Vert \nabla(u-u^{I}) \Vert_{L^{1}(\Omega_{s})}
\le  CN^{-1},&
&\Vert \nabla(u-u^{I}) \Vert_{L^{1}(\Omega_{1})}
\le  CN^{-1}\ln N\label{eq:(3.2e)}
\end{align}
where the function $E$ can be any one of $E_{1}$, $E_{2}$
or $E_{12}$.
\end{subequations}
\end{lemma}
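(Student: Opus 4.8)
The plan is to reduce all the estimates to a small number of recurring devices, organised according to whether the relevant layer quantity is \emph{exponentially small} on the subdomain in question or genuinely of layer size there. Two elementary facts about the bilinear interpolant do most of the work. First, on any rectangle $\tau$ the interpolant is stable in $L^{\infty}$, $\Vert g^{I}\Vert_{L^{\infty}(\tau)}\le\Vert g\Vert_{L^{\infty}(\tau)}$; moreover, writing $(g^{I})_{x}$ as a convex combination in $y$ of the two horizontal difference quotients of $g$ and applying the mean value theorem gives $\Vert (g^{I})_{x}\Vert_{L^{\infty}(\tau)}\le\Vert g_{x}\Vert_{L^{\infty}(\tau)}$ (and the symmetric bound for $(g^{I})_{y}$). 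Hence $\Vert g-g^{I}\Vert_{L^{\infty}(\tau)}\le 2\Vert g\Vert_{L^{\infty}(\tau)}$ and $\Vert (g-g^{I})_{x}\Vert_{L^{\infty}(\tau)}\le 2\Vert g_{x}\Vert_{L^{\infty}(\tau)}$. Second, for the $L^{1}$ estimates I use the anisotropic interpolation bound $\Vert (g-g^{I})_{x}\Vert_{L^{1}(\tau)}\le C(h_{x,\tau}\Vert g_{xx}\Vert_{L^{1}(\tau)}+h_{y,\tau}\Vert g_{xy}\Vert_{L^{1}(\tau)})$ together with its $y$-analogue. Throughout I repeatedly invoke the transition-parameter identities $e^{-\beta\lambda_{x}/\varepsilon}=N^{-\rho}$ and $e^{-\lambda_{y}/\sqrt{\varepsilon}}=N^{-\rho}$ (Assumption \ref{Assumption 2}) and the elementary one-dimensional integrals $\int_{0}^{1-\lambda_{x}}e^{-\beta(1-x)/\varepsilon}\,\mathrm{d}x\le C\varepsilon N^{-\rho}$, $\int_{1-\lambda_{x}}^{1}e^{-\beta(1-x)/\varepsilon}\,\mathrm{d}x\le C\varepsilon$ and $\int_{0}^{\lambda_{y}}e^{-y/\sqrt{\varepsilon}}\,\mathrm{d}y\le C\sqrt{\varepsilon}$.

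With these in hand the seven $L^{\infty}$ estimates are immediate. On each stated subdomain either the layer itself or the relevant derivative carries an exponentially small prefactor: on $\Omega_{2}$ one has $1-x\ge\lambda_{x}$, so every occurrence of $e^{-\beta(1-x)/\varepsilon}$ is at most $N^{-\rho}$, while on $\Omega_{1}$ (and on $\Omega_{s}$) one has $y,1-y\ge\lambda_{y}$, so $e^{-y/\sqrt{\varepsilon}}+e^{-(1-y)/\sqrt{\varepsilon}}\le 2N^{-\rho}$; on $\Omega_{s}$ both smallnesses hold at once, which is the content of the Remark. Inserting the derivative bounds of Assumption $2.1$ and applying the stability estimates above yields each line: for example $\Vert (E_{12}-E^{I}_{12})_{x}\Vert_{L^{\infty}(\Omega_{1})}\le 2\Vert (E_{12})_{x}\Vert_{L^{\infty}(\Omega_{1})}\le C\varepsilon^{-1}N^{-\rho}$, the $\varepsilon^{-1}$ from $(E_{12})_{x}$ surviving while the characteristic factor is $\le N^{-\rho}$; the remaining lines follow in the same way with the appropriate power of $\varepsilon$ read off from \eqref{eq:(2.1c)}--\eqref{eq:(2.1e)}.

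For the $L^{1}$ bound $\Vert (E_{12}-E^{I}_{12})_{x}\Vert_{L^{1}(\Omega_{2})}\le C\varepsilon^{1/2}N^{-\rho}$ I again exploit that $e^{-\beta(1-x)/\varepsilon}$ is integrable with mass $O(\varepsilon N^{-\rho})$ on $[0,1-\lambda_{x}]$ while the characteristic factor integrates to $O(\sqrt{\varepsilon})$ over the two $y$-strips; bounding $(E_{12})_{x}$ by \eqref{eq:(2.1e)} and the interpolant derivative by its difference-quotient representation, both terms are $\le C\varepsilon^{-1}\cdot\varepsilon N^{-\rho}\cdot\sqrt{\varepsilon}=C\varepsilon^{1/2}N^{-\rho}$. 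The gradient estimate on $\Omega_{s}$ is equally soft: writing $u=S+E_{1}+E_{2}+E_{12}$, the smooth part contributes $O(H_{x}+H_{y})=O(N^{-1})$ via the anisotropic bound and $|D^{2}S|\le C$, while every layer contribution is $O(N^{-\rho})\le O(N^{-1})$ because the exponential and/or characteristic integrals over $\Omega_{s}$ carry the full $N^{-\rho}$ smallness.

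The genuinely delicate case, which I would treat last, is the gradient estimate on $\Omega_{1}$, where the crude triangle inequality is useless: there $E_{1}$ is $O(1)$ and $(E_{1})_{x}$ is $O(\varepsilon^{-1})$, so one must use the \emph{interpolation error} and not the function itself. The point is that the fine mesh width $h_{x}=O(\varepsilon N^{-1}\ln N)$ present in $\Omega_{1}$ exactly absorbs the large second $x$-derivative. Concretely, the anisotropic estimate gives $\Vert (E_{1}-E^{I}_{1})_{x}\Vert_{L^{1}(\Omega_{1})}\le C h_{x}\Vert E_{1,xx}\Vert_{L^{1}(\Omega_{1})}+C H_{y}\Vert E_{1,xy}\Vert_{L^{1}(\Omega_{1})}$; using $\Vert E_{1,xx}\Vert_{L^{1}(\Omega_{1})}\le C\varepsilon^{-2}\cdot\varepsilon=C\varepsilon^{-1}$ and $h_{x}\le C\varepsilon N^{-1}\ln N$, the first term is $O(N^{-1}\ln N)$ and the second is $O(N^{-1})$, which accounts for the extra logarithm. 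The $y$-derivative of $E_{1}$, and all contributions of $S$, $E_{2}$ and $E_{12}$ on $\Omega_{1}$, are then checked to be $O(N^{-1})$ or smaller in the same manner, $E_{2}$ and the characteristic factor of $E_{12}$ being exponentially small there; summing over the four components and the two directions gives $CN^{-1}\ln N$. The main obstacle throughout is precisely this bookkeeping of mesh widths against $\varepsilon$-weighted derivative bounds and exponential masses: once the correct tool (stability for $L^{\infty}$, the anisotropic second-derivative bound for $L^{1}$) is attached to each line, every estimate collapses to one of the elementary integrals listed above.
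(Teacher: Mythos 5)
Most of your plan coincides with the paper's actual proof: the seven $L^{\infty}$ estimates are indeed obtained from nodewise stability of the bilinear interpolant plus the smallness of the exponential factors on the subdomain at hand, and your treatment of $\Vert\nabla(u-u^{I})\Vert_{L^{1}(\Omega_{1})}$ (the fine width $h_{x}\sim\varepsilon N^{-1}\ln N$ absorbing $\Vert (E_{1})_{xx}\Vert_{L^{1}(\Omega_{1})}\le C\varepsilon^{-1}$, which produces the logarithm) is exactly the standard bound the paper invokes there. The genuine gap is in the first inequality of \eqref{eq:(3.2e)}, for the layer interpolants on $\Omega_{s}$. Neither of your two declared tools reaches the claimed bound. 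The MVT stability device gives, on a coarse cell $\tau\subset\Omega_{s}$, $\vert(E_{1}^{I})_{x}\vert\le\Vert (E_{1})_{x}\Vert_{L^{\infty}(\tau)}\le C\varepsilon^{-1}e^{-\beta(1-x_{i})/\varepsilon}$, and summing the geometric series in $i$ yields $\Vert (E_{1}^{I})_{x}\Vert_{L^{1}(\Omega_{s})}\le C\varepsilon^{-1}H_{x}N^{-\rho}\sim C\varepsilon^{-1}N^{-1-\rho}$; your anisotropic bound gives $H_{x}\Vert (E_{1})_{xx}\Vert_{L^{1}(\Omega_{s})}\le CN^{-1}\cdot\varepsilon^{-1}N^{-\rho}$, the same quantity. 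Under the sole hypothesis $\varepsilon\le N^{-1}$ this is \emph{not} $O(N^{-1})$: with $\varepsilon=N^{-4}$ and $\rho=5/2$ it equals $N^{1/2}$. The point is that the $\varepsilon^{-1}$ of \eqref{eq:(2.1c)} must never meet the coarse mesh width. The paper's device is the inverse estimate \eqref{eq:(3.3)}: since the nodal values satisfy $\vert E_{1}^{I}\vert\le CN^{-\rho}$ on $\Omega_{s}$ and the cells there have $\varepsilon$-independent size $\sim N^{-1}\times N^{-1}$, one gets $\Vert\nabla E_{1}^{I}\Vert_{L^{1}(\Omega_{s})}\le CN\Vert E_{1}^{I}\Vert_{L^{1}(\Omega_{s})}\le CN^{1-\rho}$, which together with the direct bound $\Vert\nabla E_{1}\Vert_{L^{1}(\Omega_{s})}\le CN^{-\rho}$ gives the result — and only because $\rho=5/2$ makes $N^{1-\rho}\le N^{-1}$. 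Note also that the correct order of the layer contribution is $N^{1-\rho}$, not the $N^{-\rho}$ you assert.

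A smaller caution concerns the second inequality of \eqref{eq:(3.2d)}, where the same loss lurks. Your ``difference-quotient representation'' must be kept in the exact integral form $h_{x,\tau}^{-1}\int_{x_{i-1}}^{x_{i}}z_{x}(x,y_{j})\,\mathrm{d}x$ of \eqref{EI3 x analysis 1}, not collapsed by the mean value theorem: evaluating $z_{x}$ at a point of a coarse cell replaces the $x$-mass $\int_{x_{i-1}}^{x_{i}}e^{-\beta(1-x)/\varepsilon}\,\mathrm{d}x\le C\varepsilon\, e^{-\beta(1-x_{i})/\varepsilon}$ by $H_{x}e^{-\beta(1-x_{i})/\varepsilon}$, an overestimate by $H_{x}/\varepsilon$, which would degrade $\varepsilon^{1/2}N^{-\rho}$ to $\varepsilon^{-1/2}N^{-1-\rho}$ — insufficient for small $\varepsilon$. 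One additionally needs the monotonicity of $w$ in $y$ on each strip, i.e.\ \eqref{w(x,y)} (valid because $\lambda_{y}\le 1/4$), to transfer the values of $z_{x}$ and $z_{xy}$ on the mesh lines $y=y_{j-1},y_{j}$ back to the two-dimensional norm $\Vert w\Vert_{L^{1}(\Omega_{2})}\le C\varepsilon^{3/2}N^{-\rho}$; this is the actual content of the paper's proof of \eqref{eq:(3.2d)}. Your final arithmetic $C\varepsilon^{-1}\cdot\varepsilon N^{-\rho}\cdot\sqrt{\varepsilon}$ is the right outcome, but as written the proposal records neither the integral form nor the monotonicity step, and these are precisely where a literal implementation would break.
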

\begin{proof}
At first, we will prove the second inequality of \eqref{eq:(3.2d)}. The proof of \eqref{eq:(3.2a)}--\eqref{eq:(3.2c)} and the first inequality of \eqref{eq:(3.2d)} is similar.

Each bilinear basis function $\phi^{i,j}(x,y)$ satisfies
$\phi^{i,j}(x,y)=\phi_{i}(x)\phi^{j}(y)$ where $\phi_{i}$ and $\phi^{j}$ are piecewise linear basis functions. If $x\in[x_{i-1},x_{i}]$,
\begin{equation*}
\phi_{i-1}(x)=\frac{x_{i}-x}{h_{x,\tau}},\quad
\phi_{i}(x)=\frac{x-x_{i-1}}{h_{x,\tau}}.
\end{equation*}
The functions $\phi^{j-1}(y)$ and $\phi^{j}(y)$ are defined similarly in $[y_{j-1},y_{j}]$. We define $z(x,y):=E_{12}(x,y)$ and
$w(x,y):=e^{-\beta(1-x)/\varepsilon}(e^{-y/\sqrt{\varepsilon}}+e^{-(1-y)/\sqrt{\varepsilon}})$.
\par
A direct calculation and (2.1e) give
 \begin{equation}\label{E3x L1(Omegay)}
 \Vert (E_{12})_{x}\Vert_{L^{1}(\Omega_{2})}
\le
C\varepsilon^{-1} \Vert w(x,y)\Vert_{L^{1}(\Omega_{2})}
\le
C\varepsilon^{1/2}N^{-\rho}.
 \end{equation}
For $(E^{I}_{12})_{x}$, we have
\begin{equation}\label{EI3 x analysis 1}
\begin{array}{l}
(E^{I}_{12})_{x}|_{\tau_{ij}}:=(z^{I})_{x}|_{\tau_{ij}}
=
\frac{z_{i,j-1}-z_{i-1,j-1}}{h_{x,\tau}}\varphi^{j-1}(y)
+\frac{z_{i,j}-z_{i-1,j}}{h_{x,\tau}}\varphi^{j}(y)\\
=h^{-1}_{x,\tau}\left(\varphi^{j-1}(y)\int^{x_{i}}_{x_{i-1}}z_{x}(x,y_{j-1})\mathrm{d}x
+\varphi^{j}(y)\int^{x_{i}}_{x_{i-1}}z_{x}(x,y_{j})\mathrm{d}x\right)\\
=h^{-1}_{x,\tau}\left(\int^{x_{i}}_{x_{i-1}}z_{x}(x,y_{j-1})\mathrm{d}x
+\varphi^{j}(y)\int^{x_{i}}_{x_{i-1}}\int^{y_{j}}_{y_{j-1}}z_{xy}(x,t)\mathrm{d}x\mathrm{d}t\right).
\end{array}
\end{equation}
Similarly, we have
\begin{equation}\label{EI3 x analysis 2}
\begin{array}{c}
(z^{I})_{x}|_{\tau_{ij}}=h^{-1}_{x,\tau}\left(\int^{x_{i}}_{x_{i-1}}z_{x}(x,y_{j})\mathrm{d}x
-\varphi^{j-1}(y)\iint_{\tau_{ij}}z_{xy}\mathrm{d}x\mathrm{d}t\right).
\end{array}
\end{equation}

By direct calculations, we can obtain
\begin{equation}\label{w(x,y)}
\begin{matrix}
w(x,y)\ge w(x,y_{j})>0&\text{if $[y_{j-1},y_{j}]\subset[0,\lambda_{y}]$},\\
w(x,y)\ge w(x,y_{j-1})>0&\text{if $[y_{j-1},y_{j}]\subset[1-\lambda_{y},1]$}
\end{matrix}
\end{equation}
for $y\in[y_{j-1},y_{j}]$. Then, for any $\tau_{ij}\in\Omega_{2}$ and $(x,y)\in \tau_{ij}$, from (2.1e) we have
\begin{equation}\label{E3xy tauij}
|z_{xy}(x,y)|\le C\varepsilon^{-3/2}w(x,y)
\end{equation}
and
\begin{align}
|z_{x}(x,y_{j})|\le C\varepsilon^{-1}w(x,y)\quad\text{if $[y_{j-1},y_{j}]\subset[0,\lambda_{y}]$}
\label{E3x Omega y down}\\
|z_{x}(x,y_{j-1})|\le C\varepsilon^{-1}w(x,y)\quad\text{if $[y_{j-1},y_{j}]\subset[1-\lambda_{y},1]$}
\label{E3x Omega y up}
\end{align}
where we have used \eqref{w(x,y)}. Combining
\eqref{EI3 x analysis 1},\eqref{E3xy tauij}, \eqref{E3x Omega y up} or
\eqref{EI3 x analysis 2}, \eqref{E3xy tauij}, \eqref{E3x Omega y down} and
considering $0\le \phi^{j-1}(y),\phi^{j}(y)\le 1$, we obtain
\begin{align*}
&\Vert (E^{I}_{12})_{x} \Vert_{L^{1}(\tau_{ij})}=\iint_{\tau_{ij}} |(z^{I})_{x}|\mathrm{d}x\mathrm{d}y\\
&\le
C\iint_{\tau_{ij}} h^{-1}_{x,\tau}
\left(
\int_{x_{i-1}}^{x_{i}}\varepsilon^{-1} w(x,y)\mathrm{d}x
+ \iint_{\tau_{ij}} \varepsilon^{-3/2}w(x,t)\mathrm{d}x\mathrm{d}t
\right)\mathrm{d}x\mathrm{d}y\\
&\le
Ch^{-1}_{x,\tau}(h_{x,\tau}\varepsilon^{-1}+h_{x,\tau}h_{y,\tau}\varepsilon^{-3/2})\Vert w \Vert_{L^{1}(\tau_{ij})}.
\end{align*}
Then, we have
\begin{align*}
&\Vert (E^{I}_{12})_{x}\Vert_{L^{1}(\Omega_{2})}
=
\sum_{\tau_{ij}\in\Omega_{2}}\Vert (E^{I}_{12})_{x}\Vert_{L^{1}(\tau_{ij})}\\
&\le
C(\varepsilon^{-1}+\varepsilon^{1/2}N^{-1}\ln N\cdot\varepsilon^{-3/2})\Vert w(x,y) \Vert_{L^{1}(\Omega_{2})}\\
&\le
C\varepsilon^{1/2}N^{-\rho}.
\end{align*}

From \eqref{eq:(2.1a)}, we have
\begin{equation*}
u-u^{I}=(S-S^{I})+(E_{1}-E^{I}_{1})+(E_{2}-E^{I}_{2})+(E_{12}-E^{I}_{12}).
\end{equation*}
  For the proof of the first inequality of \eqref{eq:(3.2e)},  from the standard interpolation theory and \eqref{eq:(2.1b)}, we have
  \begin{equation}\label{nabla(S-SI) L1(Omega y)}
   \Vert \nabla (S-S^{I}) \Vert_{L^{1}(\Omega_{s})}
  \le
  CN^{-1}\sum_{i+j=2} \left\Vert \frac{\partial^{i+j}S}{\partial x^{i}\partial y^{j}} \right\Vert_{L^{1}(\Omega_{s})}
  \le   CN^{-1}.
  \end{equation}
  \eqref{eq:(2.1c)} and the inverse estimates \eqref{eq:(3.4)} give
  \begin{align}
   \Vert \nabla (E_{1}-E^{I}_{1}) \Vert_{L^{1}(\Omega_{s})}
   &\le
   \Vert \nabla E_{1} \Vert_{L^{1}(\Omega_{s})}
   +\Vert \nabla E^{I}_{1} \Vert_{L^{1}(\Omega_{s})}
   \label{nabla(E1-EI1) L1(Omega s)}\\
   &\le
   \Vert \nabla E_{1} \Vert_{L^{1}(\Omega_{s})}
   +CN\Vert  E^{I}_{1} \Vert_{L^{1}(\Omega_{s})}
   \le
   CN^{1-\rho}\nonumber
  \end{align}
  where we have used $\left\vert E^{I}_{1}(x,y)\right\vert\le CN^{-\rho}$ for $(x,y)\in \Omega_{s}$. Similarly, we have
  \begin{equation}\label{nabla(E2-E2I) L1(Omega s) and E12}
  \Vert \nabla (E_{2}-E^{I}_{2}) \Vert_{L^{1}(\Omega_{s})}
  +\Vert \nabla (E_{12}-E^{I}_{12}) \Vert_{L^{1}(\Omega_{s})}
  \le
  CN^{1-\rho}.
  \end{equation}
  Combining \eqref{nabla(S-SI) L1(Omega y)}, \eqref{nabla(E1-EI1) L1(Omega s)},
  \eqref{nabla(E2-E2I) L1(Omega s) and E12}, we are done.
  \par
  For the estimate of the second inequality of \eqref{eq:(3.2e)},  on the one hand, we apply the standard bounds \cite[Lemma 3.1]{Linb1Styn2:2001-SDFEM} to
 $\nabla(S-S^{I})$, $\nabla(E_{1}-E^{I}_{1})$, $\nabla(E_{2}-E^{I}_{2})$ and
 $(E_{12}-E^{I}_{12})_{y}$ and on the other hand, apply the similar analytic techniques as for the
 second inequality of \eqref{eq:(3.2d)} to $(E_{12}-E^{I}_{12})_{x}$.
\end{proof}

\section{The discrete Green's function}

In this section, we will introduce the discrete Green's function and derive some estimates of it .
\par
Let $\boldsymbol{x}^{\ast}=(x^{\ast},y^{\ast})$ be a mesh node in $\Omega$. The discrete Green's function $G\in V^{N}$ associated with $\boldsymbol{x}^{\ast}$ is defined by
\begin{equation}\label{defition of Green}
B(v^{N},G)=v^{N}(\boldsymbol{x}^{\ast}),\,\forall v^{N}\in V^{N}.
\end{equation}

We introduce
\begin{equation}\label{sigma xy}
\sigma_{x}=kN^{-1}\ln N,\quad \sigma_{y}=kN^{-1/2}.
\end{equation}
The constants $k > 0$,
sufficiently large and independent of $N$ and $\varepsilon$, are chosen according to the derivation of
Theorem \ref{energyGreen}.
\begin{theorem}\label{energyGreen}
For $\boldsymbol{x}^{*}\in \Omega_{s}\cup\Omega_{1}$ we have
\begin{equation*}
\vert\vert\vert G \vert\vert\vert^{2}\le
8\vert\vert\vert G \vert\vert\vert^{2}_{\omega}\le
 CN\ln N.
\end{equation*}

\end {theorem}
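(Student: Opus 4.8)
The plan is to pass from the global energy of $G$ to the single nodal value $G(\boldsymbol{x}^{*})$ and then to localise this value to the box $\omega$ built from $\sigma_{x},\sigma_{y}$ in \eqref{sigma xy}. First I would test \eqref{defition of Green} with $v^{N}=G$. Comparing $B(G,G)$ from \eqref{bilinear form} with the energy norm \eqref{energy norm}, the convective term $(bG_{x},G)$ vanishes after integration by parts because $G\in V^{N}\subset H^{1}_{0}(\Omega)$, and the stabilisation cross term $cb(\delta G,G_{x})$ collapses on $\Omega_{s}\cup\Omega_{2}=[0,1-\lambda_{x}]\times[0,1]$, where $\delta\equiv C^{\ast}N^{-1}$, to the nonnegative boundary integral $\tfrac{1}{2}cbC^{\ast}N^{-1}\int_{0}^{1}G(1-\lambda_{x},y)^{2}\,\mathrm{d}y\ge 0$. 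Hence $\vert\vert\vert G\vert\vert\vert^{2}\le B(G,G)=G(\boldsymbol{x}^{*})$, and since the right-hand side is then nonnegative it suffices to bound $G(\boldsymbol{x}^{*})$ by the local energy.

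For the estimate $\vert\vert\vert G\vert\vert\vert^{2}_{\omega}\le CN\ln N$ I would use $G(\boldsymbol{x}^{*})\le\Vert G\Vert_{L^{\infty}(\omega)}$ together with a discrete Sobolev (Agmon-type) inequality on the anisotropic mesh filling $\omega$. Rescaling $\omega$ to the unit square turns this into the borderline two-dimensional embedding, which supplies the factor $(\ln N)^{1/2}$ and expresses $\Vert G\Vert_{L^{\infty}(\omega)}$ through $\Vert G_{x}\Vert_{L^{2}(\omega)}$, $\Vert G_{y}\Vert_{L^{2}(\omega)}$ and $\Vert G\Vert_{L^{2}(\omega)}$ with weights fixed by $\sigma_{x},\sigma_{y}$; the inverse inequalities \eqref{eq:(3.3)}--\eqref{eq:(3.5)} handle the bookkeeping. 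The streamline contribution is harmless because the stabilisation weight $b^{2}\delta\sim N^{-1}$ in \eqref{energy norm} gives $\Vert G_{x}\Vert^{2}_{L^{2}(\omega)}\le CN\vert\vert\vert G\vert\vert\vert^{2}_{\omega}$, and the reaction term is controlled by $c$; the target is $G(\boldsymbol{x}^{*})\le C(N\ln N)^{1/2}\vert\vert\vert G\vert\vert\vert_{\omega}$. Combined with the first step this yields $\vert\vert\vert G\vert\vert\vert^{2}\le C(N\ln N)^{1/2}\vert\vert\vert G\vert\vert\vert$, hence $\vert\vert\vert G\vert\vert\vert^{2}\le CN\ln N$ and a fortiori $\vert\vert\vert G\vert\vert\vert^{2}_{\omega}\le CN\ln N$.

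It remains to prove the concentration $\vert\vert\vert G\vert\vert\vert^{2}\le 8\vert\vert\vert G\vert\vert\vert^{2}_{\omega}$, i.e.\ that the energy in $\Omega\setminus\omega$ is at most $7\vert\vert\vert G\vert\vert\vert^{2}_{\omega}$. For this I would run a Schatz--Wahlbin/Nitsche local energy argument: take a cut-off $\eta$ that vanishes on a box slightly smaller than $\omega$ about $\boldsymbol{x}^{*}$ and equals $1$ on $\Omega\setminus\omega$, so that $\eta(\boldsymbol{x}^{*})=0$. Testing \eqref{defition of Green} with the interpolant of $\eta^{2}G$ gives $B((\eta^{2}G)^{I},G)=0$; expanding $B$ produces $\vert\vert\vert\eta G\vert\vert\vert^{2}$ plus commutator and super-approximation terms supported in the transition annulus and weighted by $\nabla\eta$ (of size $\sigma_{x}^{-1}$ in $x$ and $\sigma_{y}^{-1}$ in $y$). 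Absorbing these yields a geometric decay of $\vert\vert\vert G\vert\vert\vert_{\Omega\setminus\omega}$ in units of $\sigma_{x}$ (streamwise) and $\sigma_{y}$ (crosswind); choosing the constant $k$ in \eqref{sigma xy} large enough makes the tail at most $7\vert\vert\vert G\vert\vert\vert^{2}_{\omega}$, giving the factor $8$ and closing the chain $\vert\vert\vert G\vert\vert\vert^{2}\le 8\vert\vert\vert G\vert\vert\vert^{2}_{\omega}\le CN\ln N$.

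The hard part is the crosswind term in the second step. In \eqref{energy norm} the $y$-derivative carries only the weight $\varepsilon$, so the black-box bound $\Vert G_{y}\Vert_{L^{2}(\omega)}\le\varepsilon^{-1/2}\vert\vert\vert G\vert\vert\vert_{\omega}$ (and likewise its inverse-inequality substitute) loses powers of $\varepsilon^{-1}$ and would produce $N^{3/2}$ in place of $N$. The resolution is that the discrete Green's function varies slowly in the crosswind direction across the scale $\sigma_{y}=kN^{-1/2}$, which spans $\sim N^{1/2}$ mesh intervals, so that $\Vert G_{y}\Vert_{L^{2}(\omega)}$ is in fact much smaller than the energy norm alone guarantees; exploiting this requires the structure of $G$ and the concentration of the third step rather than the bare norm. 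Reconciling the two---so that a single large $k$ makes both the decay constant and the Sobolev constant work independently of $N$ and $\varepsilon$---is the crux of the argument.
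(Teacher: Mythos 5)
Two preliminary remarks. First, the paper itself contains no proof of Theorem~\ref{energyGreen} --- it is a verbatim citation of Theorem~4.1 of the companion paper (where even the symbol $\omega$ is defined), so your attempt can only be measured against the standard weighted-norm technique of Lin\ss--Stynes on which this whole section is modelled. Second, your first step is correct: with constant $b,c$ and $\delta\equiv C^{\ast}N^{-1}$ on $\Omega_{s}\cup\Omega_{2}=[0,1-\lambda_{x}]\times[0,1]$ one indeed gets $B(G,G)-\vert\vert\vert G\vert\vert\vert^{2}=\tfrac{1}{2}cbC^{\ast}N^{-1}\int_{0}^{1}G^{2}(1-\lambda_{x},y)\,\mathrm{d}y\ge 0$, hence $\vert\vert\vert G\vert\vert\vert^{2}\le G(\boldsymbol{x}^{*})$. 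The first genuine gap is your step~2, and you flagged it yourself: the factor $N\ln N$ does \emph{not} come from a borderline two-dimensional Sobolev embedding. Any route through $\Vert G_{y}\Vert_{L^{2}(\omega)}$ pays $\varepsilon^{-1/2}$, and your proposed repair --- that $G$ varies slowly in $y$ across the scale $\sigma_{y}$ --- is circular: neither the energy norm nor your step-3 concentration (which controls only the same energy quantities) bounds $\Vert G_{y}\Vert$ better than $\varepsilon^{-1/2}\vert\vert\vert G\vert\vert\vert$. The crosswind derivative is in fact never needed. Since $G$ vanishes on $\partial\Omega$, write $G(\boldsymbol{x}^{*})=-\int_{x^{*}}^{1}G_{x}(x,y^{*})\,\mathrm{d}x$, apply Cauchy--Schwarz in $x$ against the exponential weight of scale $\sigma_{x}$ (on $\Omega_{1}$, against the interval length $\lambda_{x}\le C\varepsilon\ln N$ instead), pass from the line $y=y^{*}$ to the adjacent strip of elements by the transposed analogue of the inverse estimate \eqref{eq:(3.4)} at cost $h_{y,\tau}^{-1}\le CN$, and control $\Vert G_{x}\Vert^{2}$ through the $b^{2}\delta$-term of \eqref{energy norm} at cost $N$ (respectively through the $\varepsilon$-term at cost $\varepsilon^{-1}$ on $\Omega_{1}$). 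Either way $G(\boldsymbol{x}^{*})^{2}\le C\sigma_{x}\cdot N\cdot N\,\vert\vert\vert G\vert\vert\vert^{2}_{\omega}\le CkN\ln N\,\vert\vert\vert G\vert\vert\vert^{2}_{\omega}$: the logarithm is simply $\sigma_{x}N=k\ln N$ from \eqref{sigma xy}, not a Sobolev constant, and only the streamline derivative ever appears.

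The second gap is that your step-3 concentration into a two-sided box about $\boldsymbol{x}^{*}$ is false in the convection-dominated regime, so the chain $\vert\vert\vert G\vert\vert\vert^{2}\le 8\vert\vert\vert G\vert\vert\vert^{2}_{\omega}$ cannot be closed by an isotropic Schatz--Wahlbin cutoff iteration with ``geometric decay in units of $\sigma_{x}$'' in both $x$-directions. The discrete Green's function decays at scale $\sigma_{x}$ only \emph{downstream} of $\boldsymbol{x}^{*}$ and at scale $\sigma_{y}$ crosswind; upstream it persists over an $O(1)$ distance (damped only by the reaction term at the scale $b/c$), carrying its energy in a thin strip of width $\sim\sigma_{y}$ reaching back to the inflow boundary. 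The paper tells you this: $\Omega_{0}$ is defined by the \emph{one-sided} condition $x-x^{*}\le K\sigma_{x}\ln N$ together with $\vert y-y^{*}\vert\le K\sigma_{y}\ln N$, and $\mathrm{meas}(\Omega^{\prime}_{0})\le C\sigma_{y}\ln N$, i.e.\ $\Omega^{\prime}_{0}$ has $O(1)$ extent in $x$. Accordingly $\omega$ must be such an upstream-containing region (or, in the weighted-norm formulation of Niijima and Lin\ss--Stynes that the companion paper follows, a weight adapted to the adjoint flow, decaying only downstream and crosswind), and the absorption argument --- where the large constant $k$ in \eqref{sigma xy} is actually spent --- is run only in those two directions. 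Your cutoff $\eta$ vanishing near $\boldsymbol{x}^{*}$ and equal to $1$ outside a small box would have to ``absorb'' the $O(1)$ upstream energy into the box, which no choice of $k$ can achieve.
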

\begin{proof}
See \cite[Theorem 4.1]{Zha1Mei2Chen3:submitted}
\end{proof}
For pointwise bounds on $G$ and its first-order derivatives, we define a subdomain of $\Omega$ as
\begin{equation*}
\Omega_{0}:=\{
\boldsymbol{x}\in\Omega :x-x^{*}\le K \sigma_{x}\ln N \text{ and }
\vert y-y^{*} \vert
\le K \sigma_{y}\ln N \}.
\end{equation*}
The constant $K>0$ will be chosen later.
\par
We extend $\Omega_{0}$ to the smallest mesh domain
$\Omega^{\prime}_{0}=\Omega^{\prime}_{0}(\boldsymbol{x}^{\ast})$, i.e.,
\begin{equation*}
\Omega^{\prime}_{0}=\cup\{
\tau\in\Omega^{N}:\mbox{meas}(\Omega_{0}\cap\tau)\ne0
\}.
\end{equation*}
Note that $\mbox{meas}(\Omega'_{0})\le C\sigma_{y}\ln N$.

\begin{theorem}\label{pointGreen}
Assume that $\sigma_{x}=kN^{-1}\ln N$ and $\sigma_{y}=kN^{-1/2}$, where $k>0$
is sufficiently large and independent of $\varepsilon$ and $N$. Let $\boldsymbol{x}^{*}\in\Omega_{s}\cup\Omega_{1}$, then
for each nonnegative integer $\upsilon$, there exists a positive constant $C=C(\upsilon)$ and $K=K(\upsilon)$ such that
\begin{align*}
&\Vert  G \Vert_{W^{1,\infty}(\Omega_{s}\setminus\Omega'_{0})} \le CN^{-\upsilon},\\
 \varepsilon\vert  G \vert_{W^{1,\infty}(\Omega_{1}\setminus\Omega'_{0})}+&
\Vert G \Vert_{L^{\infty}(\Omega_{1}\setminus\Omega'_{0})} \le CN^{-\upsilon},\\
 \varepsilon\vert  G \vert_{W^{1,\infty}(\Omega_{12}\setminus\Omega'_{0})}+&
\Vert G \Vert_{L^{\infty}(\Omega_{12}\setminus\Omega'_{0})} \le C\varepsilon^{-1/4}N^{-\upsilon}
\end{align*}
and
\begin{equation*}
 \varepsilon^{1/4}\Vert
 G_{x}\Vert_{L^{\infty}(\Omega_{2}\setminus\Omega'_{0})}+
 \varepsilon^{3/4}\Vert  G_{y}\Vert_{L^{\infty}(\Omega_{2}\setminus\Omega'_{0})}+
\varepsilon^{1/4}\Vert G \Vert_{L^{\infty}(\Omega_{2}\setminus\Omega'_{0})} \le CN^{-\upsilon}.
\end{equation*}
\end {theorem}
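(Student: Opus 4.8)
The plan is to derive the pointwise bounds from the global energy estimate of Theorem~\ref{energyGreen} by an anisotropic weighted local energy argument, converting the resulting weighted $L^{2}$ decay into $L^{\infty}$ control through the inverse inequalities \eqref{eq:(3.3)}--\eqref{eq:(3.5)}. The decay is driven by the defining identity \eqref{defition of Green}: whenever $v^{N}\in V^{N}$ vanishes at the node $\boldsymbol{x}^{*}$ one has $B(v^{N},G)=0$, so $G$ carries no discrete ``source'' away from $\boldsymbol{x}^{*}$. Accordingly I would introduce a cut-off $\psi$, equal to $1$ on $\Omega\setminus\Omega'_{0}$ and to $0$ on a neighbourhood of $\boldsymbol{x}^{*}$, whose transition is spread over a band of width proportional to $K\sigma_{x}\ln N$ in $x$ and $K\sigma_{y}\ln N$ in $y$, so that $|\psi_{x}|\le C(\sigma_{x}\ln N)^{-1}$ and $|\psi_{y}|\le C(\sigma_{y}\ln N)^{-1}$; more generally I would work with an exponential barrier $\exp(\alpha(x-x^{*})/\sigma_{x}+\gamma|y-y^{*}|/\sigma_{y})$ adapted to the two mesh scales. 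Testing \eqref{defition of Green} with $v^{N}=(\psi^{2}G)^{I}\in V^{N}$, whose nodal value at $\boldsymbol{x}^{*}$ is $\psi^{2}(\boldsymbol{x}^{*})G(\boldsymbol{x}^{*})=0$, yields $B((\psi^{2}G)^{I},G)=0$.

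First I would expand $B((\psi^{2}G)^{I},G)$ and isolate the coercive part $\vert\vert\vert\psi G\vert\vert\vert^{2}$. The symmetric diffusion and reaction contributions reproduce this weighted energy up to commutator terms in which a derivative falls on $\psi$; these are supported on the transition band and are controlled by $C(\sigma\ln N)^{-1}$ times the energy of $G$ there, while the interpolation error $\psi^{2}G-(\psi^{2}G)^{I}$ is handled through Lemma~\ref{Lin identities}. This produces a recursive inequality bounding the weighted energy of $G$ on a shell by a factor, made as small as one wishes by enlarging $K$, times the energy on the neighbouring shell. Iterating across $\upsilon$ shells and seeding with the bound $\vert\vert\vert G\vert\vert\vert^{2}\le CN\ln N$ of Theorem~\ref{energyGreen} gives weighted energy of order $N^{-\upsilon}$ in the far field.

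I would then localise to individual elements $\tau\subset\Omega\setminus\Omega'_{0}$ and pass to pointwise values. Inequality \eqref{eq:(3.5)} with $q=\infty$, $p=2$ gives $\Vert G\Vert_{L^{\infty}(\tau)}\le C(h_{x,\tau}h_{y,\tau})^{-1/2}\Vert G\Vert_{L^{2}(\tau)}$, and \eqref{eq:(3.3)} bounds $\Vert G_{x}\Vert_{L^{\infty}(\tau)}$ and $\Vert G_{y}\Vert_{L^{\infty}(\tau)}$ by the corresponding element $L^{2}$ norms of the derivatives; the latter are in turn dominated by the weighted energy, whose weights are $\varepsilon+b^{2}\delta$ on $G_{x}$ and $\varepsilon$ on $G_{y}$. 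The four sets of $\varepsilon$-powers then come from matching these weights against the region-dependent mesh sizes. On $\Omega_{s}$ both sizes are coarse, $H_{x},H_{y}\sim N^{-1}$, and $\delta=C^{*}N^{-1}$; on $\Omega_{1}$ and $\Omega_{12}$ the fine outflow mesh $h_{x}\sim\varepsilon N^{-1}\ln N$ with $\delta=0$ forces the weight $\varepsilon$ on the derivatives, and the additional fine $y$-mesh $h_{y}\sim\sqrt{\varepsilon}N^{-1}\ln N$ on $\Omega_{12}$ produces the extra $\varepsilon^{-1/4}$; on $\Omega_{2}$ the normalising factors $\varepsilon^{1/4}$ and $\varepsilon^{3/4}$ are precisely those absorbing $(h_{x,\tau}h_{y,\tau})^{-1/2}$ together with the energy weights there.

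The hard part will be twofold. First, the non-symmetric terms $(b\,[(\psi^{2}G)^{I}]_{x},G)$ and $(b\,[(\psi^{2}G)^{I}]_{x},\delta b\,G_{x})$ generate first-order, non-sign-definite cut-off commutators; I would control the convective one by integration by parts, exploiting the skew-symmetry of the first-order operator so that its principal part cancels and the remainder is absorbed by the reaction energy $(cG,G)$ and the stabilised streamline energy $(b^{2}\delta G_{x},G_{x})$. The most delicate geometric point is the interface between $\Omega_{s}\cup\Omega_{2}$, where $\delta=C^{*}N^{-1}$, and $\Omega_{1}\cup\Omega_{12}$, where $\delta=0$: across it the weight on $G_{x}$ jumps and the coercivity mechanism changes character, so a cut-off band straddling the interface must be analysed with care to preserve the recursion. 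Second, attaining the $\varepsilon$-clean bounds on $\Omega_{s}$ and $\Omega_{1}$, where a crude energy-to-$L^{\infty}$ passage leaves a spurious factor $\varepsilon^{-1/2}$ coming from the inverse inequality on the fine mesh, forces the anisotropic weight to be tuned so that this factor is absorbed by the far-field decay; this is the technical crux of the argument, whereas the surviving $\varepsilon$-powers on $\Omega_{2}$ and $\Omega_{12}$ are exactly the residue that the characteristic-layer scaling in the $y$-direction cannot remove.
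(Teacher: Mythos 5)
Your overall mechanism --- anisotropic exponentially weighted energy decay away from $\boldsymbol{x}^{*}$, seeded by Theorem~\ref{energyGreen} and converted to pointwise bounds by inverse estimates --- is indeed the strategy of the proof the paper invokes (it simply cites the argument of Lin{\ss} and Stynes). But note that the statement of Theorem~\ref{energyGreen} already contains the weighted bound $\vert\vert\vert G\vert\vert\vert^{2}_{\omega}\le CN\ln N$, so the cut-off/shell recursion you construct is redundant: on $\Omega\setminus\Omega'_{0}$ the weight is at least $e^{cK\ln N}=N^{cK}$, and choosing $K=K(\upsilon)$ large converts the weighted $L^{2}$ control into any prescribed power $N^{-\upsilon}$ in one step. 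Moreover your bookkeeping of the recursion is off: iterating across $\upsilon$ shells, each gaining a factor that is merely ``small for $K$ large'', yields a bound of the form $(C/K)^{\upsilon}$, a constant, not $N^{-\upsilon}$; to get a power of $N$ you need either $O(\ln N)$ shells or the genuinely exponential barrier, which is what the cited proof uses and what your proposal only gestures at.

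The genuine gap is your treatment of the $\varepsilon$-powers. You propose to remove the spurious $\varepsilon^{-1/2}$ (coming from the energy weight $\varepsilon^{1/2}$ on $G_{x}$, $G_{y}$ and from inverse inequalities on the fine mesh) by ``tuning the anisotropic weight so that this factor is absorbed by the far-field decay''. This cannot work: $\sigma_{x}=kN^{-1}\ln N$, $\sigma_{y}=kN^{-1/2}$ and $K$ are independent of $\varepsilon$, so the attainable decay is a fixed power of $N$, whereas Assumption~2.2 only gives $\varepsilon\le N^{-1}$ --- $\varepsilon$ may be exponentially small, so $\varepsilon^{-1/2}$ can exceed $N^{cK}$ for every fixed $K$, and the constants in Theorem~\ref{pointGreen} must be uniform in $\varepsilon$. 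The $\varepsilon$-factors have to be eliminated structurally, as in the cited proof: on $\Omega_{s}$ one bounds $\Vert G_{x}\Vert_{L^{\infty}(\tau)}$ and $\Vert G_{y}\Vert_{L^{\infty}(\tau)}$ by $CH^{-1}\Vert G\Vert_{L^{\infty}(\tau)}\le CN(H_{x}H_{y})^{-1/2}\Vert G\Vert_{L^{2}(\tau)}$, i.e.\ runs the inverse estimates \eqref{eq:(3.3)}, \eqref{eq:(3.5)} on $G$ itself and uses only the $(cG,G)$ part of the energy, never the $\varepsilon$-weighted derivative terms; on $\Omega_{1}$ and $\Omega_{12}$ one obtains $\Vert G\Vert_{L^{\infty}}$ from $G(x,y)=-\int_{x}^{1}G_{x}(t,y)\,\mathrm{d}t$ (using $G=0$ at the outflow boundary), where Cauchy--Schwarz over the strip of width $\lambda_{x}\sim\varepsilon\ln N$ produces $\lambda_{x}^{1/2}\sim\varepsilon^{1/2}\ln^{1/2}N$, exactly cancelling the $\varepsilon^{-1/2}$ attached to $\Vert\varepsilon^{1/2}G_{x}\Vert$, followed by an inverse estimate in $y$ only. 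Your crude two-dimensional passage $(h_{x,\tau}h_{y,\tau})^{-1/2}$ gives $\varepsilon^{-3/4}$ on $\Omega_{12}$, overshooting the stated bound $C\varepsilon^{-1/4}N^{-\upsilon}$ by $\varepsilon^{-1/2}$, and similarly fails for the $G_{y}$ bound on $\Omega_{s}$ and for $\Vert G\Vert_{L^{\infty}(\Omega_{1}\setminus\Omega'_{0})}$; without the one-dimensional integration device the theorem as stated is out of reach of your argument.
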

\begin{proof}
The proof is similar to the one of \cite[Theorem 4.2]{Linb1Styn2:2001-SDFEM}.
\end{proof}

\section{Maximum-Norm error estimates}
\noindent
In this section we shall derive bounds on $\vert (u-U)(\boldsymbol{x}) \vert$ for $\boldsymbol{x}$ lying in the various subregions of $\Omega$.
\par
 Taking $v^{N}=U-u^{I}$ in \eqref{defition of Green} yields
 \begin{equation*}
 (U-u)(\boldsymbol{x}^{\ast})=(U-u^{I})(\boldsymbol{x}^{\ast})=B(U-u^{I},G)
 \end{equation*}
 where $\boldsymbol{x}^{*}$ is a mesh node.
 Let $e$ denote the interpolation error, i.e.,
 \begin{equation*}
 e(\boldsymbol{x}):=(u-u^{I})(\boldsymbol{x}).
 \end{equation*}
 Then from \eqref{weak formulation},\eqref{SDFEM} and \eqref{bilinear form} we have
 \begin{equation}\label{estimate idea}
(U-u)(\boldsymbol{x}^{*})=-\varepsilon (\Delta u,\delta b G_{x})+B(e,G).
\end{equation}
The various terms on the right-hand side are bounded separately.
\par
For the following analysis, we derive some useful local estimates.
\begin{lemma}\label{local estimates}
If Assumption $2.1$ hold true, then there exists a constant $C$ such that
\begin{align}
 &\Vert (E-E^{I})_{x} \Vert_{L^{1}\left(\Omega_{s}\bigcap\Omega'_{0}\right)}\le
 CN^{-\rho}\sigma_{y}\ln N,\tag{5.2a}\label{eq:(5.2a)}\\
  &\Vert ((E_{1}+E_{12})-(E^{I}_{1}+E^{I}_{12}))_{y} \Vert_{L^{1}\left(\Omega_{s}\bigcap\Omega'_{0}\right)}\le
 CN^{-\rho}\sigma_{y}\ln N,\tag{5.2b}\label{eq:(5.2b)}\\
 &\Vert \Delta(E_{1}+E_{12}) \Vert_{L^{1}(\Omega_{s}\cap\Omega^{\prime}_{0})}
 \le
 C\varepsilon^{-1}N^{-\rho}\sigma_{y}\ln N\tag{5.2c}\label{eq:(5.2c)}
\end{align}
where $\sigma_{y}$ as in \eqref{sigma xy} and the function $E$ can be any one of $E_{1}$, $E_{2}$
or $E_{12}$.
\end{lemma}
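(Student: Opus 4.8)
The plan is to bound all three quantities by a single mechanism. On $\Omega_{s}$ every layer function and each of its derivatives is, up to a power of $\varepsilon$ produced by differentiation, a sharply decaying exponential whose integral across the full extent of $\Omega_{s}$ in its decay direction returns a compensating power of $\varepsilon$, whose value at the transition point contributes the factor $N^{-\rho}$, and whose restriction to the $y$-band of $\Omega'_{0}$ contributes the factor $\sigma_{y}\ln N$. Since \eqref{eq:(5.2c)} involves no interpolant I would treat it first as the model case, and then reduce \eqref{eq:(5.2a)} and \eqref{eq:(5.2b)} to the same computation by the triangle inequality, splitting $(E-E^{I})_{x}$ (and its $y$-analogue) into the true part $E_{x}$ and the interpolant part $(E^{I})_{x}$.

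For the direct parts the whole argument rests on two elementary integrals. For a factor decaying towards $x=1$, using $\lambda_{x}=\rho\varepsilon\beta^{-1}\ln N$,
\[
\int_{0}^{1-\lambda_{x}}e^{-\beta(1-x)/\varepsilon}\,\mathrm{d}x\le C\varepsilon\,e^{-\beta\lambda_{x}/\varepsilon}=C\varepsilon N^{-\rho},
\]
while for a factor decaying towards $y=0$ or $y=1$, on $\Omega_{s}$ one has $y\in[\lambda_{y},1-\lambda_{y}]$ with $\lambda_{y}=\rho\sqrt{\varepsilon}\ln N$, so both $e^{-y/\sqrt{\varepsilon}}$ and $e^{-(1-y)/\sqrt{\varepsilon}}$ are bounded by $N^{-\rho}$ and their integral over the band $\{\,|y-y^{*}|\le K\sigma_{y}\ln N\,\}$ is $\le C\sqrt{\varepsilon}N^{-\rho}$. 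Feeding \eqref{eq:(2.1c)}--\eqref{eq:(2.1e)} into these, using that each layer bound factors as (function of $x$)$\times$(function of $y$) and that $\mathrm{meas}(\Omega'_{0})\le C\sigma_{y}\ln N$, yields \eqref{eq:(5.2c)} at once (the $\varepsilon^{-2}$ in $\Delta E_{1}$ and $\Delta E_{12}$ is cut to $\varepsilon^{-1}$ by the $x$-integral), and yields the $E_{x}$ and $E_{y}$ halves of \eqref{eq:(5.2a)} and \eqref{eq:(5.2b)} with room to spare.

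For the interpolant parts I would reuse the representation already derived for Lemma \ref{subdomain estimates}: on each $\tau_{ij}\subset\Omega_{s}\cap\Omega'_{0}$, \eqref{EI3 x analysis 1} together with $0\le\varphi^{j}(y)\le1$ gives $|(E^{I})_{x}|\le h_{x,\tau}^{-1}\bigl(\int_{x_{i-1}}^{x_{i}}|E_{x}(x,y_{j-1})|\,\mathrm{d}x+\iint_{\tau_{ij}}|E_{xy}|\bigr)$ pointwise, and integrating over $\tau_{ij}$,
\[
\Vert (E^{I})_{x}\Vert_{L^{1}(\tau_{ij})}\le C\,h_{y,\tau}\int_{x_{i-1}}^{x_{i}}|E_{x}(x,y_{j-1})|\,\mathrm{d}x+C\,h_{y,\tau}\iint_{\tau_{ij}}|E_{xy}|;
\]
the $y$-derivative is handled by the same identity with $x$ and $y$ interchanged. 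Summing over the elements, the decisive step is to leave the one-dimensional integral of the exponential \emph{unsplit} so that it telescopes across $\Omega_{s}$ and reproduces the compensating $\varepsilon$ (respectively $\sqrt{\varepsilon}$); bounding instead by (maximum)$\times$(width) would leave an uncompensated inverse power of $\varepsilon$ and fail. This is exactly the computation performed for the second inequality of \eqref{eq:(3.2d)}, now localized to $\Omega_{s}\cap\Omega'_{0}$.

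The main obstacle is therefore not any individual estimate but the $\varepsilon$-bookkeeping: in each case ($E_{1},E_{2},E_{12}$ for \eqref{eq:(5.2a)}; $E_{1},E_{12}$ for \eqref{eq:(5.2b)}; $E_{1},E_{12}$ for \eqref{eq:(5.2c)}) one must check that the negative powers of $\varepsilon$ created by differentiation are exactly absorbed by the integrated exponential, and, in the interpolant terms, that the inverse factor $h_{x,\tau}^{-1}$ (respectively $h_{y,\tau}^{-1}$) recombines with the element integrals to leave the same $N^{-\rho}\sigma_{y}\ln N$ as the direct part. Once the two model integrals above and the telescoping summation are in place, every case is routine.
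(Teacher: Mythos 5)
Your proposal is correct, and it follows what is evidently the intended route --- though the comparison is necessarily one-sided, because the paper supplies almost no proof of its own: \eqref{eq:(5.2a)} and \eqref{eq:(5.2b)} are dispatched by citing Lemma 4.1 of a companion paper of Zhang and Mei, and \eqref{eq:(5.2c)} by the remark that it follows directly from Assumption 2.1 and the definition of $\Omega^{\prime}_{0}$. That remark is precisely your model computation: $|\Delta(E_{1}+E_{12})|\le C\varepsilon^{-2}e^{-\beta(1-x)/\varepsilon}$ on $\Omega_{s}$, the integral $\int_{0}^{1-\lambda_{x}}e^{-\beta(1-x)/\varepsilon}\,\mathrm{d}x\le C\varepsilon N^{-\rho}$ converts $\varepsilon^{-2}$ into $\varepsilon^{-1}N^{-\rho}$, and the $y$-extent of $\Omega_{s}\cap\Omega^{\prime}_{0}$, at most $C\sigma_{y}\ln N$, supplies the remaining factor. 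Your treatment of the interpolant parts by localizing \eqref{EI3 x analysis 1}--\eqref{EI3 x analysis 2} to $\Omega_{s}\cap\Omega^{\prime}_{0}$ is exactly the mechanism of the paper's own proof of the second inequality of \eqref{eq:(3.2d)}, which is plainly what the cited companion lemma runs on as well; on $\Omega_{s}$ it is in fact easier, since every anchoring mesh line $y_{j-1},y_{j}$ (resp.\ $x_{i-1},x_{i}$) lies inside $\bar{\Omega}_{s}$ where the same layer bounds hold, so the monotonicity device \eqref{w(x,y)} needed in $\Omega_{2}$ is not required. The bookkeeping you flagged generically does need the care you advertise, and it checks out: for the $E_{xy}$ term of $E_{2}$ in \eqref{eq:(5.2a)} one must telescope the $y$-exponential over all of $[\lambda_{y},1-\lambda_{y}]$ (maximum times band width would leave an uncompensated $\varepsilon^{-1/2}$, as you say), which sacrifices the band localization, and the factor $\sigma_{y}\ln N$ is then recovered only from the prefactor $h_{y,\tau}\le 3N^{-1}\le C\sigma_{y}\ln N$. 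Finally, your case list is the correct one, and the exclusion of $E_{2}$ from \eqref{eq:(5.2b)} is essential: $\Vert(E_{2})_{y}\Vert_{L^{1}(\Omega_{s}\cap\Omega^{\prime}_{0})}$ is only $O(N^{-\rho})$, without the factor $\sigma_{y}\ln N$, because the $\varepsilon^{-1/2}$ consumes the band width after the $y$-telescoping --- which is exactly why the paper estimates $(E_{2}-E^{I}_{2})_{y}$ on $\Omega_{s}\cap\Omega^{\prime}_{0}$ by separate means in the proof of Theorem \ref{estimate bilinear}.
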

\setcounter{equation}{2}
\begin{proof}
The proof of \eqref{eq:(5.2a)} and \eqref{eq:(5.2b)} is similar to \cite[Lemma 4.1]{Zhan1Mei2:submitted}.
Inequality \eqref{eq:(5.2c)} can be deduced directly by Assumption 2.1 and the definition of
$\Omega^{\prime}_{0}$.
\end{proof}

\begin{theorem}\label{estimate bilinear}
Assume that $\sigma_{x}=kN^{-1}\ln N$, $\sigma_{y}=kN^{-1/2}$ and $\varepsilon\le N^{-1}$. Then, for $\boldsymbol{x}^{*}\in \Omega_{s} \cup \Omega_{1}$, we have
\begin{equation*}
|B(e,G)|\le C(N^{-9/4}+\varepsilon^{1/4}N^{-2})(\ln^{3}N)\cdot \vert\vert\vert G \vert\vert\vert.
\end{equation*}
\end{theorem}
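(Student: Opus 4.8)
The plan is to expand $B(e,G)$ through \eqref{bilinear form} and to collect the five resulting terms according to how the discrete Green's function enters:
\begin{equation*}
B(e,G)=\underbrace{\big((\varepsilon+b^{2}\delta)e_{x},G_{x}\big)}_{T_{1}}
+\underbrace{\varepsilon(e_{y},G_{y})}_{T_{2}}
+\underbrace{c(e,G)}_{T_{3}}
+\underbrace{b(e_{x},G)}_{T_{4}}
+\underbrace{bc(\delta e,G_{x})}_{T_{5}}.
\end{equation*}
Here $T_{1},T_{2},T_{5}$ each carry a derivative of $G$ that is controlled by a single component of the energy norm, since $((\varepsilon+b^{2}\delta)G_{x},G_{x})$, $\varepsilon\Vert G_{y}\Vert^{2}$ and $b^{2}\Vert\delta^{1/2}G_{x}\Vert^{2}$ are all $\le\vert\vert\vert G\vert\vert\vert^{2}$, whereas $T_{3},T_{4}$ contain an undifferentiated $G$ that I would control through $c\Vert G\Vert^{2}\le\vert\vert\vert G\vert\vert\vert^{2}$. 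Because $\delta\equiv0$ on $\Omega_{1}\cup\Omega_{12}$, the terms $T_{1},T_{5}$ and the $\delta$-part of the convection act only on $\Omega_{s}\cup\Omega_{2}$.

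For every term I would first split the integration as $\Omega=\Omega'_{0}\cup(\Omega\setminus\Omega'_{0})$. On the far field $\Omega\setminus\Omega'_{0}$ the estimates of Theorem \ref{pointGreen} show that $G$ and its first derivatives (suitably $\varepsilon$-weighted on $\Omega_{2},\Omega_{12}$) are $O(N^{-\upsilon})$; choosing $\upsilon$ large and using that $\vert\vert\vert G\vert\vert\vert$ is at worst polynomially small in $N$, these contributions are absorbed into $CN^{-9/4}\vert\vert\vert G\vert\vert\vert$. The whole difficulty is therefore localised to the small mesh box $\Omega'_{0}$, whose measure satisfies $\mathrm{meas}(\Omega'_{0})\le C\sigma_{y}\ln N\sim N^{-1/2}\ln N$; this factor $\mathrm{meas}(\Omega'_{0})^{1/2}\sim N^{-1/4}\ln^{1/2}N$ is precisely what upgrades the $O(N^{-2})$ interpolation bounds on $\Omega_{s}$ to the governing rate $N^{-9/4}$.

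On $\Omega'_{0}$ I would split $e=(S-S^{I})+\sum(E-E^{I})$ and treat the smooth part $S$ through the Lin identities of Lemma \ref{Lin identities} rather than by a crude Cauchy--Schwarz, which is far too lossy because $G_{x}$ only enters the energy norm with the weight $(\varepsilon+b^{2}\delta)^{1/2}\sim N^{-1/2}$. For $T_{1}$ and $T_{2}$ the first two identities of Lemma \ref{Lin identities} replace $(S-S^{I})_{x},(S-S^{I})_{y}$ by $J_{\tau}S_{xyy},F_{\tau}S_{xxy}$ with $|J_{\tau}|,|F_{\tau}|\le Ch^{2}\sim N^{-2}$, giving the superconvergent factor; pairing with $\Vert G_{x}\Vert,\Vert G_{y}\Vert$ via the energy norm (and with $\Vert G_{xy}\Vert$, handled by the inverse estimate \eqref{eq:(3.3)}) and with $\mathrm{meas}(\Omega'_{0})^{1/2}$ then yields contributions of the required size, with the $\varepsilon$-weights making $T_{2}$ even smaller. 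For $T_{3}$ a direct Cauchy--Schwarz on $\Omega'_{0}$, using $\Vert S-S^{I}\Vert_{L^{\infty}(\Omega_{s})}\le CN^{-2}$ (standard interpolation and \eqref{eq:(2.1b)}) and hence $\Vert S-S^{I}\Vert_{L^{2}(\Omega'_{0})}\le CN^{-2}\mathrm{meas}(\Omega'_{0})^{1/2}$, already produces the dominant $N^{-9/4}\ln^{1/2}N\,\vert\vert\vert G\vert\vert\vert$. For the convection term $T_{4}$ I would invoke the third identity of Lemma \ref{Lin identities}: the $R(S,G)$ contributions in $G_{x},G_{y},G_{xy}$ are treated as above, the two undifferentiated-$G$ contributions inherit the factor $h^{2}\sim N^{-2}$ and again supply the $N^{-9/4}$ rate, while the boundary line integrals $\tfrac{h_{x,\tau}^{2}}{12}(\int_{l_{2}}-\int_{l_{4}})S_{xx}G$ are converted to area integrals by the inverse estimate \eqref{eq:(3.4)} and telescope across columns (continuity of $G$ and smoothness of $S$). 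The layer components of $e$ on $\Omega_{s}\cap\Omega'_{0}$ are handled by the local bounds \eqref{eq:(5.2a)}--\eqref{eq:(5.2b)} of Lemma \ref{local estimates}, which already carry the $N^{-\rho}\sigma_{y}\ln N$ smallness, together with the exact $L^{2}$ integration of the layer exponentials (the $\varepsilon^{-1}$ factors partly cancel against $\varepsilon^{1/2}$ from integrating $e^{-\beta(1-x)/\varepsilon}$).

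The main obstacle, I expect, is the bookkeeping when $\Omega'_{0}$ protrudes from $\Omega_{s}\cup\Omega_{1}$ into the characteristic-layer strips $\Omega_{2}$ and, for $\boldsymbol{x}^{*}$ near the outflow boundary, into the corner region $\Omega_{12}$. There the mesh is strongly anisotropic, the interpolation error is only $O(N^{-2}\ln^{2}N)$, the derivatives of $E_{2},E_{12}$ blow up like $\varepsilon^{-1/2},\varepsilon^{-1}$, and the Green's-function bounds of Theorem \ref{pointGreen} carry the weights $\varepsilon^{1/4},\varepsilon^{3/4}$ (and $\varepsilon^{-1/4}$ on $\Omega_{12}$). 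Balancing these against the anisotropic mesh widths $h_{x}\sim\varepsilon N^{-1}\ln N$, $h_{y}\sim\sqrt{\varepsilon}N^{-1}\ln N$ and against the $L^{1}$ bound in the second inequality of \eqref{eq:(3.2d)} is what produces the second, $\varepsilon^{1/4}N^{-2}$, contribution and the final power $\ln^{3}N$; arranging for all $\varepsilon$-powers and logarithms to cancel in these regions is the delicate part of the argument, and is where I would expect to spend most of the effort.
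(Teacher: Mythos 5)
Your overall architecture --- splitting off the far field with Theorem \ref{pointGreen}, localising to $\Omega'_{0}$ with the measure factor $(\sigma_{y}\ln N)^{1/2}$, and treating the smooth part by the Lin identities of Lemma \ref{Lin identities} --- agrees with the paper, and your handling of $T_{1},T_{2},T_{3},T_{5}$ is essentially the paper's. But there is a genuine gap in the convection term $T_{4}=b(e_{x},G)$: you say nothing about its layer components outside $\Omega_{s}\cap\Omega'_{0}$, and the tools you cite cannot handle them. The paper's essential device here, absent from your proposal, is \emph{integration by parts}: it rewrites $b\left((E-E^{I})_{x},G\right)_{\Omega_{s}\cup\Omega_{1}}$ as $-b\left(E-E^{I},G_{x}\right)_{\Omega_{s}\cup\Omega_{1}}$, and likewise for $E_{1},E_{12}$ on $\Omega_{2}\cup\Omega_{12}$, the boundary term $\left(b(E_{2}-E^{I}_{2}),Gn_{x}\right)_{\partial(\Omega_{s}\cup\Omega_{1})}$ vanishing because $n_{x}=0$ on the horizontal interfaces and $G=0$ on $\partial\Omega$; only $S-S^{I}$ and $E_{2}-E^{I}_{2}$ keep the $x$-derivative and go through Lin's third identity with the telescoping line integrals. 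Without this step the term is unestimable on $\Omega_{1}\cap\Omega'_{0}$: there $(E_{1}-E^{I}_{1})_{x}$ and $(E_{12}-E^{I}_{12})_{x}$ are of size $\varepsilon^{-1}$, and every available pairing retains a negative power of $\varepsilon$ --- for instance $\Vert (E_{1}-E^{I}_{1})_{x}\Vert_{L^{\infty}}\Vert G\Vert_{L^{1}(\Omega_{1}\cap\Omega'_{0})}\le C\varepsilon^{-1}(\varepsilon\sigma_{y}\ln^{2}N)^{1/2}\Vert G\Vert\sim\varepsilon^{-1/2}N^{-1/4}\ln^{3/2}N\,\vert\vert\vert G\vert\vert\vert$, while the $L^{1}\times L^{\infty}$ pairing costs $(h_{x}h_{y})^{-1/2}\sim\varepsilon^{-1/2}N$ through \eqref{eq:(3.5)}, and Lin's identity fares no better since $(E_{1})_{xxx}\sim\varepsilon^{-3}$ and $h_{x}\sim\varepsilon N^{-1}\ln N$ cancels these weights only in the $G_{x}$ contribution, not in the undifferentiated-$G$ ones. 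Your citation of \eqref{eq:(5.2a)}--\eqref{eq:(5.2b)} covers only $\Omega_{s}\cap\Omega'_{0}$, and \eqref{eq:(5.2b)} concerns the $y$-derivative, which belongs to $T_{2}$, not $T_{4}$.

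A consequence of the same gap is that you misattribute the dominant error. After integration by parts it is $b\left(E-E^{I},G_{x}\right)_{\Omega_{1}\cap\Omega'_{0}}$ that governs: the $L^{\infty}$ bound $CN^{-2}\ln^{2}N$ of Lemma \ref{interpolation} times $\Vert G_{x}\Vert_{L^{1}(\Omega_{1}\cap\Omega'_{0})}\le(\varepsilon\sigma_{y}\ln^{2}N)^{1/2}\Vert G_{x}\Vert$ with $\varepsilon^{1/2}\Vert G_{x}\Vert\le C\vert\vert\vert G\vert\vert\vert$ yields exactly $CN^{-9/4}(\ln^{3}N)\vert\vert\vert G\vert\vert\vert$ (with the $\Omega_{12}$ analogues supplying $C\varepsilon^{1/4}N^{-2}\ln^{3}N$), whereas your candidate $c(e,G)$ contributes only $N^{-9/4}\ln^{1/2}N+\varepsilon^{1/4}N^{-2}\ln^{5/2}N$. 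Relatedly, your plan to control $T_{4}$ through $c\Vert G\Vert^{2}\le\vert\vert\vert G\vert\vert\vert^{2}$ cannot be carried out as stated for the layer parts: the correct control there runs through $G_{x}$ with the $\varepsilon^{1/2}$ and $\delta^{1/2}$ weights of the energy norm, which is available only once the derivative has been moved off the interpolation error.
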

\begin{proof}
For the following analysis, we define
\begin{equation*}
\tilde{\Omega}:=\left((\Omega_{s}\cup\Omega_{1})\cap\Omega^{\prime}_{0}\right)\cup
(\Omega_{2}\cup\Omega_{12})
\end{equation*}
and modify the bilinear form by mean of integration by parts and the decomposition \eqref{eq:(2.1a)} as follows:
\begin{align*}
B(e,G)=&((\varepsilon+b^{2}\delta)e_{x},G_{x})
+\varepsilon(e_{y},G_{y})+(b(S-S^{I})_{x},G)\\
&+(b(E_{2}-E^{I}_{2})_{x},G)_{\Omega_{2}\cup\Omega_{12}}
+(b(E_{2}-E^{I}_{2}),Gn_{x})_{\partial(\Omega_{s}\cup\Omega_{1})}\\
&-(b(E-E^{I}),G_{x})_{\Omega_{s}\cup\Omega_{1}}
-(b(E_{1}-E^{I}_{1}),G_{x})_{\Omega_{2}\cup\Omega_{12}}\\
&-(b(E_{12}-E^{I}_{12}),G_{x})_{\Omega_{2}\cup\Omega_{12}}
+(ce,\delta bG_{x})+(ce,G)
\end{align*}
where $E=E_{1}+E_{2}+E_{12}$, $n_{x}$ is the $x$-axis coordinate of the outward normal vector of $\partial(\Omega_{s}\cup\Omega_{1})$.
Note that $(b(E_{2}-E^{I}_{2}),G n_{x})_{\partial(\Omega_{s}\cup\Omega_{1})}=0$
because $G = 0$ on $\partial\Omega$.
\par
The discussion of
$B(e,G)$ will be separated into three parts. In (a), we will analyze
$((\varepsilon+b^{2}\delta)e_{x},G_{x})$ and $\varepsilon(e_{y},G_{y})$. In (b),
$(b(S-S^{I})_{x},G)$ and $(b(E_{2}-E^{I}_{2})_{x},G)_{\Omega_{2}\cup\Omega_{12}}$ will be
discussed. In (c), we will analyze the residual terms of $B(e,G)$.\\*
(a) In this part, based on the boundary layer behavior of $u$, we discuss $u-u^{I}$ by means of
the following decomposition
\begin{equation*}
u-u^{I}=(S-S^{I})+(E_{1}-E^{I}_{1})+(E_{2}-E^{I}_{2})+(E_{12}-E^{I}_{12}).
\end{equation*}
\par
According to Lemma \ref{Lin identities}, we have
\begin{equation*}
 \left((S-S^{I})_{x},G_{x} \right)_{\tau}
=\int_{\tau}S_{xyy}J_{\tau}(y)\left(G_{x}-\frac{2}{3}(y-y_{\tau})G_{xy}\right)
\mathrm{d}x\mathrm{d}y.
\end{equation*}
Then
\begin{equation*}
 \left| \left((S-S^{I})_{x},G_{x} \right)_{\tau}\right|
 \le
 Ch^{2}_{y,\tau}\Vert S_{xyy} \Vert_{\tau} \Vert G_{x} \Vert_{\tau}
\end{equation*}
where we have used the inverse inequalities \eqref{eq:(3.3)}. Thus,
\begin{align}\label{bilinearestimates start}
 &\left| \left((\varepsilon+b^{2}\delta)(S-S^{I})_{x},G_{x} \right)_{\tilde{\Omega}}\right|\\
\le
C&N^{-2}
\Vert (\varepsilon+b^{2}\delta)^{1/2}S_{xyy} \Vert_{\tilde{\Omega}}
\Vert (\varepsilon+b^{2}\delta)^{1/2}G_{x} \Vert_{\Omega}\nonumber\\
\le
C&N^{-5/2} (\sigma_{y}\ln N)^{1/2}
\vert\vert\vert G \vert\vert\vert
\le
CN^{-11/4}(\ln^{1/2}N)\vert\vert\vert G \vert\vert\vert\nonumber
\end{align}
where we have used $\text{meas}(\tilde{\Omega})\le C\sigma_{y}\ln N+C\varepsilon^{1/2}\ln N\le
C\sigma_{y}\ln N$.

Similarly, we have
\begin{align}\label{Sy}
&\left|\left((\varepsilon+b^{2}\delta)(E_{1}-E^{I}_{1})_{x},G_{x}\right)
_{\Omega_{1}\cap\Omega^{\prime}_{0}}\right|
\le
CN^{-9/4}(\ln^{1/2}N) \vert\vert\vert G \vert\vert\vert,\\
&\left|\left((\varepsilon+b^{2}\delta)(E_{1}-E^{I}_{1})_{x},G_{x}\right)
_{\Omega_{2}\cup\Omega_{12}}\right|
\le
C\varepsilon^{3/4}N^{-3}(\ln^{5/2}N)
\vert\vert\vert G \vert\vert\vert,\\
&\left|\left( (\varepsilon+b^{2}\delta)(E_{2}-E^{I}_{2})_{x},G_{x} \right)_{\Omega_{2}\cup\Omega_{12}}\right|
\le
C\varepsilon^{1/4}N^{-5/2}(\ln^{2}N)
 \vert\vert\vert G \vert\vert\vert,\\
&\left|\left((\varepsilon+b^{2}\delta)(E_{12}-E^{I}_{12})_{x},G_{x}\right)_{\Omega_{12}}\right|
\le
C\varepsilon^{1/4}N^{-2}(\ln^{2}N) \vert\vert\vert G \vert\vert\vert,\\
&\varepsilon\left| \left((S-S^{I})_{y},G_{y} \right)_{\tilde{\Omega}}\right|
\le
C\varepsilon^{1/2}N^{-9/4}(\ln^{1/2}N)\vert\vert\vert G \vert\vert\vert,\\
&\varepsilon\left| \left((E_{2}-E^{I}_{2})_{y},G_{y} \right)
_{\Omega_{s}\cap\Omega^{\prime}_{0}}\right|
\le
C\varepsilon^{1/4}N^{-(2+\rho)}\vert\vert\vert G \vert\vert\vert,\\
&\varepsilon\left|\left( (E-E^{I})_{y}, G_{y} \right)_{\Omega_{1}\cap\Omega^{\prime}_{0}}\right|
\le
C\varepsilon^{3/4}N^{-2}(\ln^{2}N) \vert\vert\vert G \vert\vert\vert,\\
&\varepsilon\left|\left( (E-E^{I})_{y}, G_{y} \right)_{\Omega_{12}}\right|
\le
C\varepsilon^{3/4}N^{-2}(\ln^{2}N)\vert\vert\vert G \vert\vert\vert,\\
&\varepsilon\left|\left( (E_{2}-E^{I}_{2})_{y},G_{y} \right)_{\Omega_{2}}\right|
\le
C \varepsilon^{1/4}N^{-2}
\vert\vert\vert G \vert\vert\vert.
\end{align}

Furthermore Lemma \ref{local estimates} and the inverse inequality \eqref{eq:(3.5)} imply
\begin{align}\label{Ex-s}
&\left|((\varepsilon+b^{2}\delta)(E-E^{I})_{x},G_{x})_{\Omega_{s}\cap\Omega^{\prime}_{0}}\right|\\
\le
C&N^{-1}\Vert (E-E^{I})_{x} \Vert_{L^{1}(\Omega_{s}\cap\Omega^{\prime}_{0})}
\cdot \Vert G_{x} \Vert_{L^{\infty}(\Omega_{s}\cap\Omega^{\prime}_{0})}\nonumber\\
\le
C&N^{-1}N^{-\rho}\sigma_{y}(\ln N)\cdot N\Vert G_{x}\Vert_{\Omega_{s}\cap\Omega^{\prime}_{0}}\nonumber\\
\le
C&N^{-\rho}(\ln N)\vert\vert\vert G \vert\vert\vert.\nonumber
\end{align}
Similar argument shows
\begin{align}
&\varepsilon\left|\big((E_{1}-E^{I}_{1})_{y},G_{y}\big)_{\Omega_{s}\cap\Omega^{\prime}_{0}}\right|
\le
C\varepsilon^{1/2}N^{1/2-\rho}(\ln N)\vert\vert\vert G \vert\vert\vert,\\
&\varepsilon\left|\big((E_{12}-E^{I}_{12})_{y},G_{y}\big)_{\Omega_{s}\cap\Omega^{\prime}_{0}}\right|
\le
C\varepsilon^{1/2}N^{1/2-\rho}(\ln N)\vert\vert\vert G \vert\vert\vert,\\
&\left|\left((\varepsilon+b^{2}\delta) (E_{12}-E^{I}_{12})_{x},G_{x} \right)_{\Omega_{2}}\right|
\le
C\varepsilon^{1/4}N^{1/2-\rho}(\ln^{-1/2}N)
\vert\vert\vert G \vert\vert\vert
\end{align}
where we have used \eqref{eq:(3.2d)} in the last inequality.
\par
Lemma \ref{subdomain estimates} and  \selectlanguage{german}H"older\selectlanguage{english} inequalities give
\begin{align}
&\left|\left( (\varepsilon+b^{2}\delta)(E_{2}-E^{I}_{2})_{x},G_{x} \right)_{\Omega_{1}\cap\Omega^{\prime}_{0}}\right|\\
\le&
\varepsilon\Vert  (E_{2}-E^{I}_{2})_{x} \Vert_{L^{\infty}(\Omega_{1}\cap\Omega^{\prime}_{0})}
\Vert  G_{x} \Vert_{L^{1}(\Omega_{1}\cap\Omega^{\prime}_{0})}\nonumber\\
\le&
C\varepsilon N^{-\rho}(\varepsilon\sigma_{y}\ln^{2}N)^{1/2}
\Vert G_{x} \Vert_{\Omega_{1}\cap\Omega^{\prime}_{0}}\nonumber\\
\le&
C\varepsilon N^{-(1/4+\rho)}(\ln N)
\vert\vert\vert G \vert\vert\vert.\nonumber
\end{align}
Similarly, we have
\begin{align}
&\left|\left((\varepsilon+b^{2}\delta) (E_{12}-E^{I}_{12})_{x},G_{x} \right)_{\Omega_{1}\cap\Omega^{\prime}_{0}}\right|
\le
CN^{-(1/4+\rho)}(\ln N) \vert\vert\vert G \vert\vert\vert,\\
&\varepsilon\left|\left( (E_{1}-E^{I}_{1})_{y}, G_{y} \right)_{\Omega_{2}}\right|
\le
C\varepsilon^{3/4} N^{-\rho}(\ln^{1/2}N)
\vert\vert\vert G \vert\vert\vert,\\
&\varepsilon\left|\left( (E_{12}-E^{I}_{12})_{y},G_{y} \right)_{\Omega_{2}}\right|
\le
C \varepsilon^{1/4}N^{-\rho}(\ln^{1/2}N)
\vert\vert\vert G \vert\vert\vert.
\end{align}

In view of Theorem \ref{pointGreen} with $\upsilon=2$ and Lemma \ref{subdomain estimates}, we see
\begin{align}\label{secondorder end}
&\left|((\varepsilon+b^{2}\delta)(u-u^{I})_{x},G_{x})_{(\Omega_{s}\cup\Omega_{1})\setminus\Omega^{\prime}_{0}}
+\varepsilon((u-u^{I})_{y},G_{y})_{(\Omega_{s}\cup\Omega_{1})\setminus\Omega^{\prime}_{0}}\right|\\
\le&
C \Vert \nabla(u-u^{I}) \Vert_{L^{1}(\Omega_{s}\cup\Omega_{1})} (N^{-1}\Vert \nabla G \Vert_{L^{\infty}(\Omega_{s}\setminus\Omega^{\prime}_{0})}
+\varepsilon \Vert \nabla G \Vert_{L^{\infty}(\Omega_{1}\setminus\Omega^{\prime}_{0})})
\nonumber\\
\le&
CN^{-2}.\nonumber
\end{align}
(b) We see from Lemma \ref{Lin identities} that
\begin{align*}
\left(
(S-S^{I})_{x},G\right)
&=\sum_{\tau\in\Omega}\int_{\tau}R(S,G)\mathrm{d}x\mathrm{d}y+\sum_{\tau\in\Omega}\frac{h^{2}_{x,\tau}}{12}
\left(\int_{l_{2}}-\int_{l_{4}}\right)S_{xx}G\mathrm{d}y
\end{align*}
where $R(\cdot,\cdot)$ as in Lemma \ref{Lin identities}. Based on our Shishkin mesh and the properties of the discrete Green function $G$, we decompose the first term as follows:
\begin{align*}
&\sum_{\tau\in\Omega}
\int_{\tau}R(S,G)\mathrm{d}x\mathrm{d}y
=
\big(\sum_{\tau\in\tilde{\Omega}}
+\sum_{\tau\in(\Omega_{s}\cup\Omega_{1})\setminus\Omega^{\prime}_{0}}
\big)
\int_{\tau}R_{1}(S,G)+R_{2}(S,G)\mathrm{d}x\mathrm{d}y
\end{align*}
where
\begin{equation*}
R_{1}(S,G)=\frac{1}{3}F_{\tau}(x)(x-x_{\tau})S_{xxx}G_{x}-\frac{h^{2}_{x,\tau}}{12}S_{xxx}G
\end{equation*}
and
\begin{equation*}
R_{2}(S,G)=J_{\tau}(y)S_{xyy}\big(
G-(x-x_{\tau})G_{x}-\frac{2}{3}(y-y_{\tau})G_{y}+\frac{2}{3}(x-x_{\tau})
(y-y_{\tau})G_{xy}\big).
\end{equation*}

Firstly, from Assumption \ref{Assumption 1} and the definition of $\tilde{\Omega}$, we have
\begin{align}
&\sum_{\tau\in\tilde{\Omega}}\label{Sx G 1}
\int_{\tau}\frac{1}{3}|F_{\tau}(x)(x-x_{\tau})S_{xxx}G_{x}|\mathrm{d}x\mathrm{d}y\\
\le
C&\sum_{\tau\in\tilde{\Omega}}
h^{3}_{x,\tau}\Vert S_{xxx} \Vert_{L^{\infty}(\tau)}
\Vert G_{x} \Vert_{L^{1}(\tau)}\nonumber\\
\le
C&H^{3}_{x}\Vert G_{x} \Vert_{L^{1}((\Omega_{s}\cap\Omega^{\prime}_{0})\cup\Omega_{2})}
+Ch^{3}_{x}\Vert G_{x} \Vert_{L^{1}((\Omega_{1}\cap\Omega^{\prime}_{0})\cup\Omega_{12})}\nonumber\\
\le
C&N^{-11/4}(\ln^{1/2}N) \vert\vert\vert G\vert\vert\vert\nonumber
\end{align}
and
\begin{equation}\label{Sx G 2}
\sum_{\tau\in\tilde{\Omega}}
\int_{\tau}\frac{h^{2}_{x,\tau}}{12}|S_{xxx}G|\mathrm{d}x\mathrm{d}y
\le
CN^{-9/4}(\ln^{1/2}N) \vert\vert\vert G\vert\vert\vert.
\end{equation}
Applying inverse inequalities \eqref{eq:(3.3)} to the last part of $R_{2}(S,G)$, we obtain
\begin{align}\label{Sx G 3}
\sum_{\tau\in\tilde{\Omega}}
\int_{\tau}
|R_{2}(S,G)|\mathrm{d}x\mathrm{d}y
&\le
C\sum_{\tau\in\tilde{\Omega}}
h^{2}_{y,\tau}\Vert S_{xyy} \Vert_{\tau}
\Vert G \Vert_{\tau}\\
&\le
CN^{-2}\cdot (\sigma_{y}\ln N)^{1/2}\cdot \vert\vert\vert G\vert\vert\vert\nonumber\\
&\le
CN^{-9/4}(\ln^{1/2}N) \vert\vert\vert G\vert\vert\vert.\nonumber
\end{align}
From Theorem \ref{pointGreen} with $\upsilon=1$, we have
\begin{align}\label{Sx G 4}
&\sum_{\tau\in(\Omega_{s}\cup\Omega_{1})\setminus\Omega^{\prime}_{0}}
\int_{\tau}|R_{1}(S,G)|\mathrm{d}x\mathrm{d}y\\
\le
C&\sum_{\tau\in(\Omega_{s}\cup\Omega_{1})\setminus\Omega^{\prime}_{0}}
\left(h^{3}_{x,\tau}\Vert S_{xxx} \Vert_{L^{1}(\tau)}
\Vert G_{x} \Vert_{L^{\infty}(\tau)}+h^{2}_{x,\tau}\Vert S_{xxx} \Vert_{L^{1}(\tau)}
\Vert G \Vert_{L^{\infty}(\tau)}\right)\nonumber\\
\le
C&H^{3}_{x}\Vert G_{x} \Vert_{L^{\infty}(\Omega_{s}\setminus\Omega^{\prime}_{0})}
+Ch^{3}_{x}\Vert G_{x} \Vert_{L^{\infty}(\Omega_{1}\setminus\Omega^{\prime}_{0})}
+CN^{-2}\Vert G \Vert
_{L^{\infty}((\Omega_{s}\cup\Omega_{1})\setminus\Omega^{\prime}_{0})}\nonumber\\
\le
C&N^{-2}\nonumber
\end{align}
and
\begin{align}\label{Sx G 5}
\sum_{\tau\in(\Omega_{s}\cup\Omega_{1})\setminus\Omega^{\prime}_{0}}
\int_{\tau}|R_{2}(S,G)|
&\le
C\sum_{\tau\in(\Omega_{s}\cup\Omega_{1})\setminus\Omega^{\prime}_{0}}
H^{2}_{y}\Vert S_{xyy} \Vert_{L^{1}(\tau)}\Vert G \Vert_{L^{\infty}(\tau)}\\
&\le
CN^{-2}\nonumber
\end{align}
where we have used inverse inequalities \eqref{eq:(3.3)}.

Secondly, we set $L:=\{(1-\lambda_{x},y):0\le y \le 1\}$.
Then we have
\begin{align*}
&\left|\sum_{\tau\in\Omega}\frac{h^{2}_{x,\tau}}{12}
\left(\int_{l_{2}}-\int_{l_{4}}\right)S_{xx}G\mathrm{d}y\right|
=\frac{1}{12}\left|\sum_{l\in L}(H^{2}_{x}-h^{2}_{x})\int_{l}S_{xx}G\mathrm{d}y \right|\\
\le&
CH^{2}_{x}\sum_{l\in L}\int_{l}\left|S_{xx}G\right|\mathrm{d}y
\le
CH^{2}_{x} \left(\sum_{l\in L\cap\Omega^{\prime}_{0}}\int_{l}\left|S_{xx}G\right|\mathrm{d}y+\sum_{l\in L\setminus\Omega^{\prime}_{0}} \int_{l}\left|S_{xx}G\right|\mathrm{d}y\right)\\
=&
CH^{2}_{x}(\mathrm{I}+\mathrm{II}).
\end{align*}
The estimate of I is straightforward:
\begin{align*}
\mathrm{I}&=\sum_{l\in L\cap\Omega^{\prime}_{0}}\int_{l}\left|S_{xx}G\right|(1-\lambda_{x},y)\mathrm{d}y
\le
\int^{y^{\prime}_{2}}_{y^{\prime}_{1}}\left|
\int^{1}_{1-\lambda_{x}}(S_{xxx}G+S_{xx}G_{x})\mathrm{d}x
\right|\mathrm{d}y\\
&\le
\Vert S_{xxx} \Vert_{L^{\infty}(D_{L})}\Vert G \Vert_{L^{1}(D_{L})}
+\Vert S_{xx} \Vert_{L^{\infty}(D_{L})}\Vert G_{x} \Vert_{L^{1}(D_{L})}\\
&\le
C(\varepsilon\sigma_{y}\ln^{2}N)^{1/2}
(\Vert G \Vert_{D_{L}}+\Vert G_{x} \Vert_{D_{L}})\\
&\le
CN^{-1/4}\ln N\cdot (\varepsilon^{1/2}\Vert G \Vert_{D_{L}}+\varepsilon^{1/2}\Vert G_{x} \Vert_{D_{L}})\\
&\le
CN^{-1/4}(\ln N)\Vert\vert G \vert\Vert
\end{align*}
where $\{1-\lambda_{x}\}\times[y^{\prime}_{1},y^{\prime}_{2}]=L\cap\Omega^{\prime}_{0}$ and
$D_{L}:=[1-\lambda_{x},1]\times[y^{\prime}_{1},y^{\prime}_{2}]$.\\*
From Theorem \ref{pointGreen} with $\upsilon=1$, we have
\begin{align*}
\mathrm{II}
&=
\sum_{l\in L\cap(\Omega_{s}\setminus\Omega^{\prime}_{0})}
\int_{l}\left|S_{xx}G\right|\mathrm{d}y
+
\sum_{l\in L\cap(\Omega_{2}\setminus\Omega^{\prime}_{0})}
\int_{l}\left|S_{xx}G\right|\mathrm{d}y\\
&\le
C\Vert G \Vert_{L^{\infty}(\Omega_{s}\setminus\Omega^{\prime}_{0})}
+C\varepsilon^{1/2}(\ln N)\Vert G \Vert_{L^{\infty}(\Omega_{2}\setminus\Omega^{\prime}_{0})}\\
&\le
CN^{-1}.
\end{align*}
Considering the estimates for I and II, we obtain
\begin{equation}
\left|\sum_{\tau\in\Omega}\frac{h^{2}_{x,\tau}}{12}
\left(\int_{l_{2}}-\int_{l_{4}}\right)S_{xx}G\mathrm{d}y\right|
\le
CN^{-9/4}(\ln N)\vert\vert\vert G \vert\vert\vert.
\end{equation}

The estimates for $((E_{2}-E^{I}_{2})_{x},G)_{\Omega_{2}\cup\Omega_{12}}$ are the same as $((S-S^{I})_{x},G)$. Thus we have
\begin{equation}
|b((E_{2}-E^{I}_{2})_{x},G)_{\Omega_{2}\cup\Omega_{12}}|
\le
C\varepsilon^{1/4}N^{-2}(\ln^{2} N) \vert\vert\vert G \vert\vert\vert.
\end{equation}
(c) From Lemma \ref{subdomain estimates}, we have
\begin{align}\label{3-E_all-s}
 \left|
b\left(E-E^{I},G_{x}\right)_{\Omega_{s}\cap\Omega^{\prime}_{0}}\right|
&\le
C\Vert E-E^{I} \Vert_{L^{\infty}(\Omega_{s}\cap\Omega^{\prime}_{0})}
\Vert G_{x} \Vert_{L^{1}(\Omega_{s}\cap\Omega^{\prime}_{0})}\\
&\le
CN^{-\rho}\cdot(\sigma_{y}\ln N)^{1/2} \Vert G_{x} \Vert_{\Omega_{s}\cap\Omega^{\prime}_{0}}\nonumber\\
&\le
CN^{1/4-\rho}(\ln^{1/2} N)
\vert\vert\vert G \vert\vert\vert.\nonumber
\end{align}
Similarly, we have
\begin{align}
&\left|b\left(E_{1}-E^{I}_{1},G_{x}\right)_{\Omega_{2}}\right|
\le
C\varepsilon^{1/4}N^{1/2-\rho}(\ln^{1/2}N)\vert\vert\vert G \vert\vert\vert,\\
&\left|b\left(E_{12}-E^{I}_{12},G_{x}\right)_{\Omega_{2}}\right|
\le
C\varepsilon^{1/4}N^{1/2-\rho}(\ln^{1/2}N)\vert\vert\vert G \vert\vert\vert.
\end{align}

In view of Lemma \ref{interpolation}, we obtain
\begin{align}
 \left|b
\left(E-E^{I},G_{x}\right)_{\Omega_{1}\cap\Omega^{\prime}_{0}}\right|
&\le
C\Vert E-E^{I} \Vert_{L^{\infty}(\Omega_{1}\cap\Omega^{\prime}_{0})}
\Vert G_{x} \Vert_{L^{1}(\Omega_{1}\cap\Omega^{\prime}_{0})}\\
&\le
CN^{-2}(\ln^{2}N)\cdot(\varepsilon\sigma_{y}\ln^{2}N)^{1/2} \Vert G_{x} \Vert_{\Omega_{1}\cap\Omega^{\prime}_{0}}\nonumber\\
&\le
CN^{-9/4}(\ln^{3}N)
\vert\vert\vert G \vert\vert\vert.\nonumber
\end{align}
Similar argument shows
\begin{align}
&\left|b\left(E_{1}-E^{I}_{1},G_{x}\right)_{\Omega_{12}}\right|
\le
C\varepsilon^{1/4}N^{-2}(\ln^{3}N)\cdot\vert\vert\vert G \vert\vert\vert,\\
&\left|b\left(E_{12}-E^{I}_{12},G_{x}\right)_{\Omega_{12}}\right|
\le
C\varepsilon^{1/4}N^{-2}(\ln^{3}N)\cdot\vert\vert\vert G \vert\vert\vert,\\
&\left|c(u-u^{I},\delta G_{x})\right|
\le
CN^{-5/2}(\ln^{2}N)\vert\vert\vert G \vert\vert\vert,\\
&\left|c(u-u^{I},G)_{\tilde{\Omega}}\right|
\le
C(N^{-9/4}\ln^{1/2} N+\varepsilon^{1/4}N^{-2}\ln^{5/2}N) \vert\vert\vert G \vert\vert\vert.
\end{align}
Theorem \ref{pointGreen} with $\upsilon=1$ and Lemma \ref{interpolation} yield
\begin{align}
&\left|b
\left(E-E^{I},G_{x}\right)_{\Omega_{1}\setminus\Omega^{\prime}_{0}}\right|\\
\le C&
\Vert E-E^{I} \Vert_{L^{\infty}(\Omega_{1})}
\mathrm{meas}(\Omega_{1}\setminus\Omega^{\prime}_{0})
\Vert G_{x}\Vert_{L^{\infty}(\Omega_{1}\setminus\Omega^{\prime}_{0})}\nonumber\\
\le C&
N^{-2}\ln^{2}N(\varepsilon \ln N)
\Vert G_{x}\Vert_{L^{\infty}(\Omega_{1}\setminus\Omega^{\prime}_{0})}
\le CN^{-2}\nonumber
\end{align}
and
\begin{align}
&\left|b
\left(E-E^{I},G_{x}\right)_{\Omega_{s}\setminus\Omega^{\prime}_{0}}\right|
\le C N^{-2},\\
&\left|(u-u^{I},G)_{(\Omega_{s}\cup\Omega_{1})\setminus\Omega^{\prime}_{0}}\right|
\le
CN^{-2}\label{bilinearestimates end}.
\end{align}

Collecting \eqref{bilinearestimates start}--\eqref{bilinearestimates end}, we are done.

\end{proof}

\begin{theorem}\label{estimate secondorder}
Assume that $u$ satisfies Assumption \ref{Assumption 1} and $\varepsilon\le N^{-1}$. Let $\sigma_{x}=kN^{-1}\ln N$ and $\sigma_{y}=kN^{-1/2}$.
Then for any mesh node $\boldsymbol{x}^{\ast}\in\Omega_{s}\cup\Omega_{1}$
\begin{enumerate}
\item if $\Omega^{\prime}_{0}\subset \Omega_{s}\cup\Omega_{1}$, we have
\begin{equation*}
\left|\left(\varepsilon\Delta u,\delta b G_{x}\right)
\right|\le
C(N^{-\rho}+\varepsilon N^{-5/4})(\ln N) \vert\vert\vert G \vert\vert\vert.
\end{equation*}
\item if $\Omega^{\prime}_{0}\not\subset \Omega_{s}\cup\Omega_{1}$, we have
\begin{equation*}
\left|\left(\varepsilon\Delta u,\delta b G_{x}\right)
\right|\le
C(N^{-\rho}+\varepsilon^{1/4}\delta_{y})(\ln N) \vert\vert\vert G \vert\vert\vert.
\end{equation*}
\end{enumerate}
\end{theorem}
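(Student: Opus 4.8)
The plan is to localise the integral using both the structure of $\delta$ and the discrete Green's function, and then estimate each layer component separately. Since $\delta\equiv 0$ on $\Omega_{1}\cup\Omega_{12}$, only the stabilised region contributes, i.e. $(\varepsilon\Delta u,\delta bG_{x})=(\varepsilon\Delta u,\delta bG_{x})_{\Omega_{s}\cup\Omega_{2}}$. I would then insert the decomposition $\Delta u=\Delta S+\Delta E_{1}+\Delta E_{2}+\Delta E_{12}$ from Assumption \ref{Assumption 1} and split $\Omega_{s}\cup\Omega_{2}$ into the near part $(\Omega_{s}\cup\Omega_{2})\cap\Omega^{\prime}_{0}$ and the far part $(\Omega_{s}\cup\Omega_{2})\setminus\Omega^{\prime}_{0}$, so that both cases reduce to estimating these two pieces componentwise. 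Throughout I would use that $x\le 1-\lambda_{x}$ on $\Omega_{s}\cup\Omega_{2}$, whence $e^{-\beta(1-x)/\varepsilon}\le N^{-\rho}$, so that $E_{1}$ and $E_{12}$ are exponentially damped there.

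On the far part I would invoke Theorem \ref{pointGreen} with $\upsilon$ large, bounding $\Vert G_{x}\Vert_{L^{\infty}}$ by $CN^{-\upsilon}$ on $\Omega_{s}\setminus\Omega^{\prime}_{0}$ and by $C\varepsilon^{-1/4}N^{-\upsilon}$ on $\Omega_{2}\setminus\Omega^{\prime}_{0}$, paired with the $L^{1}$ norms of the components of $\varepsilon\delta\,\Delta u$. The essential arithmetic is that the dangerous $\varepsilon^{-2}$ in $\Delta E_{1}$ and $\Delta E_{12}$ is integrated against $e^{-\beta(1-x)/\varepsilon}$ over $x\le 1-\lambda_{x}$, producing an extra factor $\varepsilon$ together with $N^{-\rho}$; combined with the prefactor $\varepsilon\delta=O(\varepsilon N^{-1})$ every negative power of $\varepsilon$ cancels and the product is rendered negligible by $N^{-\upsilon}$. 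The same holds for $\Delta E_{2}$, whose $L^{1}$ norm on $\Omega_{2}$ is only $O(\varepsilon^{-1/2})$ and is absorbed by the $\varepsilon^{-1/4}N^{-\upsilon}$ Green's bound.

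On the near part the pointwise Green's bounds are unavailable, so I would pair with the energy norm. Writing $\delta b=\bigl(\delta b/(\varepsilon+b^{2}\delta)^{1/2}\bigr)(\varepsilon+b^{2}\delta)^{1/2}$ and using $\Vert(\varepsilon+b^{2}\delta)^{1/2}G_{x}\Vert\le\vert\vert\vert G\vert\vert\vert$ with $(\varepsilon+b^{2}\delta)^{1/2}\ge\beta(C^{\ast}N^{-1})^{1/2}$ on $\Omega_{s}\cup\Omega_{2}$, the coefficient multiplying $\Delta u$ is $O(N^{-1/2})$. For the regular part this gives a bound of the form $\varepsilon N^{-1/2}\Vert\Delta S\Vert_{L^{2}(\Omega^{\prime}_{0}\cap(\Omega_{s}\cup\Omega_{2}))}\vert\vert\vert G\vert\vert\vert$, and combining the $\delta$-weight with the smallness of $G_{x}$ over the near region (controlled by $\mathrm{meas}(\Omega^{\prime}_{0})\le C\sigma_{y}\ln N$) yields the regular-part contribution of order $\varepsilon N^{-5/4}\ln N$. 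The exponential-layer components on $\Omega^{\prime}_{0}\cap\Omega_{s}$ carry the extra factor $e^{-\beta(1-x)/\varepsilon}\le N^{-\rho}$, so after the $\varepsilon$-cancellation they feed only into the $N^{-\rho}$ term.

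The two cases are distinguished solely by how $\Omega^{\prime}_{0}$ meets the stabilised region. In Case 1, $\Omega^{\prime}_{0}\subset\Omega_{s}\cup\Omega_{1}$, so $\Omega^{\prime}_{0}\cap\Omega_{2}=\emptyset$; the active characteristic layer $E_{2}$ never meets $\Omega^{\prime}_{0}$, all of $\Omega_{2}$ lies in the harmless far field, and one is left with $C(N^{-\rho}+\varepsilon N^{-5/4})(\ln N)\vert\vert\vert G\vert\vert\vert$. In Case 2, $\Omega^{\prime}_{0}$ reaches into $\Omega_{2}\cup\Omega_{12}$, so $\Delta E_{2}$ on $\Omega^{\prime}_{0}\cap\Omega_{2}$ is no longer damped; estimating it with the energy norm over the near part, the $\varepsilon^{-1}$ of $\Delta E_{2}$ is only partly offset by $\varepsilon$ and by the $\sqrt{\varepsilon}$ gained from integrating $e^{-y/\sqrt{\varepsilon}}$ in $y$, leaving $\varepsilon^{1/4}$ times the geometric factor $\delta_{y}$, which is the extra term in the second bound. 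The main obstacle is precisely this near-field bookkeeping for the layers: one must check that the $\varepsilon^{-2}$ in $\Delta E_{1},\Delta E_{12}$ is \emph{fully} cancelled by $\varepsilon\delta$ together with the $x$-integration of $e^{-\beta(1-x)/\varepsilon}$, because a naive Cauchy--Schwarz that keeps $\Vert\Delta E_{1}\Vert_{L^{2}}\sim\varepsilon^{-3/2}$ against $\Vert G_{x}\Vert_{L^{2}}\le CN^{1/2}\vert\vert\vert G\vert\vert\vert$ leaves an uncontrolled negative power of $\varepsilon$; the cancellation closes only because $\delta$ was chosen to vanish on $\Omega_{1}\cup\Omega_{12}$ and $x\le 1-\lambda_{x}$ holds throughout the stabilised region.
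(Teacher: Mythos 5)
Your skeleton (dropping $\Omega_{1}\cup\Omega_{12}$ since $\delta=0$ there, decomposing $\Delta u$ via Assumption \ref{Assumption 1}, splitting into $\Omega^{\prime}_{0}$ and its complement, far field via Theorem \ref{pointGreen}, and the $N^{-\rho}$ damping of $E_{1},E_{12}$ on the stabilised region) matches the paper, and your treatment of $\Delta(E_{1}+E_{12})$ near $\Omega^{\prime}_{0}$ is essentially the paper's (Lemma \ref{local estimates} plus an inverse estimate, paired directly with $G_{x}$). But there is a genuine quantitative gap in your near-field treatment of $S$ and $E_{2}$. Pairing these directly with $G_{x}$ through the energy norm costs a factor $(\varepsilon+b^{2}\delta)^{-1/2}\le CN^{1/2}$ inside $\Omega^{\prime}_{0}$, and your own arithmetic then gives
\begin{equation*}
\varepsilon\delta\,\Vert\Delta S\Vert_{L^{2}(\Omega^{\prime}_{0})}\Vert G_{x}\Vert
\le C\varepsilon N^{-1}\cdot(\sigma_{y}\ln N)^{1/2}\cdot N^{1/2}\,\vert\vert\vert G\vert\vert\vert
= C\varepsilon N^{-3/4}(\ln^{1/2}N)\,\vert\vert\vert G\vert\vert\vert,
\end{equation*}
not the claimed $\varepsilon N^{-5/4}\ln N$: the ``smallness of $G_{x}$ over the near region'' that you invoke to recover the missing $N^{-1/2}$ does not exist, because the pointwise bounds of Theorem \ref{pointGreen} hold only \emph{off} $\Omega^{\prime}_{0}$. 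The same loss of $N^{1/2}$ hits your Case-2 term: direct Cauchy--Schwarz for $\varepsilon\delta_{y}\Delta E_{2}$ on $\Omega_{2}\cap\Omega^{\prime}_{0}$ yields $\varepsilon\delta_{y}\cdot\varepsilon^{-3/4}\cdot N^{1/2}\vert\vert\vert G\vert\vert\vert\sim\varepsilon^{1/4}N^{-1/2}\vert\vert\vert G\vert\vert\vert$, which is larger by $N^{1/2}/\ln N$ than the asserted $\varepsilon^{1/4}\delta_{y}(\ln N)\vert\vert\vert G\vert\vert\vert$.

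The paper's device, which your proposal is missing, is an integration by parts in $x$ applied to $(\Delta S,\delta bG_{x})$ and $(\Delta E_{2},\delta bG_{x})$ --- legitimate because $\delta$ is piecewise constant and because $\partial_{x}$ costs no powers of $\varepsilon$ on $S$ and $E_{2}$. This trades $G_{x}$ for $G$, so one pays only $\Vert G\Vert\le C\vert\vert\vert G\vert\vert\vert$; for instance $\varepsilon\delta_{s}\Vert(\Delta S)_{x}\Vert_{L^{\infty}}\Vert G\Vert_{L^{1}(\Omega\cap\Omega^{\prime}_{0})}\le C\varepsilon N^{-1}(\sigma_{y}\ln N)^{1/2}\vert\vert\vert G\vert\vert\vert=C\varepsilon N^{-5/4}(\ln^{1/2}N)\vert\vert\vert G\vert\vert\vert$, and similarly $\varepsilon\delta_{y}\Vert(\Delta E_{2})_{x}\Vert_{\Omega_{2}}\Vert G\Vert_{\Omega_{2}}\le C\varepsilon^{1/4}\delta_{y}\vert\vert\vert G\vert\vert\vert$. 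The price is interface terms along $\Gamma_{s,x}=\Omega_{s}\cap\Omega_{1}$ and $\Gamma_{y,xy}=\Omega_{2}\cap\Omega_{12}$ where $\delta$ jumps; these are handled by writing $G(1-\lambda_{x},y)=-\int_{1-\lambda_{x}}^{1}G_{x}(x,y)\,\mathrm{d}x$ near $\Omega^{\prime}_{0}$ and by Theorem \ref{pointGreen} elsewhere, and it is precisely these interface terms that generate both the $\varepsilon N^{-5/4}\ln N$ contribution and, in Case 2, the $\varepsilon^{1/4}\delta_{y}\ln N$ contribution (Case 1 replaces the $\Gamma_{y,xy}$ term by a far-field bound of order $N^{-2}$, since then $\Omega_{2}\cap\Omega^{\prime}_{0}=\varnothing$). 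Without this step no rearrangement of weighted Cauchy--Schwarz avoids $\Vert G_{x}\Vert\le CN^{1/2}\vert\vert\vert G\vert\vert\vert$ inside $\Omega^{\prime}_{0}$, so your near-field bookkeeping cannot reach the stated rates.
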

\begin{proof}
 We set $E=E_{1}+E_{2}+E_{12}$ and  define
$\Gamma_{s,x}:=\Omega_{s}\cap\Omega_{1}$
and $\Gamma_{y,xy}:=\Omega_{2}\cap\Omega_{12}$. At the beginning, integration by parts and the definition of $\delta$ yield
\begin{align*}
(\Delta S,\delta bG_{x})=&
(\Delta S,\delta bG_{x})_{\Omega_{s}\cup\Omega_{2}}\\
=&(\Delta S,\delta_{s} bG)_{\Gamma_{s,x}}+
(\Delta S,\delta_{y} bG)_{\Gamma_{y,xy}}
-((\Delta S)_{x},\delta bG)
\end{align*}
and
\begin{equation*}
(\Delta E_{2},\delta bG_{x})=(\Delta E_{2},\delta_{s} bG)_{\Gamma_{s,x}}+
(\Delta E_{2},\delta_{y} bG)_{\Gamma_{y,xy}}
-((\Delta E_{2})_{x},\delta bG).
\end{equation*}
Thus,
\begin{align*}
(\varepsilon\Delta u,\delta bG_{x})
=&\varepsilon\delta_{s}(\Delta (S+E_{2}), bG)_{\Gamma_{s,x}}+
\varepsilon\delta_{y}(\Delta (S+E_{2}), bG)_{\Gamma_{y,xy}}\\
&-\varepsilon((\Delta (S+E_{2}))_{x},\delta bG
)+\varepsilon\left(\Delta (E_{1}+E_{12}),\delta bG_{x}\right).
\end{align*}
The terms on the right-hand side are analyzed separately.
\par
From Assumption \ref{Assumption 1}, we have
\begin{align}\label{Gamma s x S}
&\varepsilon\delta_{s} \left|(\Delta S, bG)_{\Gamma_{s,x}}\right|
\le
C\varepsilon N^{-1}\Vert\Delta S\Vert_{L^{\infty}(\Gamma_{s,x})}
\Vert G \Vert_{L^{1}(\Gamma_{s,x})}\\
\le
C&\varepsilon N^{-1}\left(
\int_{\Gamma_{s,x}\cap\Omega^{\prime}_{0}}|G(1-\lambda_{x},y)|\mathrm{d}y
+\Vert G \Vert_{L^{1}(\Gamma_{s,x}\setminus\Omega^{\prime}_{0})}
\right)\nonumber\\
\le
C&\varepsilon N^{-1}\left(
\int_{\Gamma_{s,x}\cap\Omega^{\prime}_{0}}\int^{1}_{1-\lambda_{x}}|G_{x}(x,y)|\mathrm{d}x\mathrm{d}y
+\Vert G \Vert_{L^{\infty}(\Omega_{s}\setminus\Omega^{\prime}_{0})}
\right)\nonumber\\
\le
C&\varepsilon N^{-1}(\varepsilon\sigma_{y}\ln^{2}N)^{1/2} \Vert G_{x} \Vert_{\Omega_{1}}
+C\varepsilon N^{-1}\Vert G \Vert_{L^{\infty}(\Omega_{s}\setminus\Omega^{\prime}_{0})}\nonumber\\
\le
C&\varepsilon N^{-5/4}(\ln N) \vert\vert\vert G \vert\vert\vert+C\varepsilon N^{-2}\nonumber
\end{align}
where we have used Theorem \ref{pointGreen} with $\upsilon=1$.
Similarly, we have
\begin{align}
&\varepsilon\delta_{s} \left|(\Delta E_{2}, bG)_{\Gamma_{s,x}}\right|
\le
CN^{-(1+\rho)}(\ln^{1/2}N) \vert\vert\vert G \vert\vert\vert,\label{Gamma s x E2}\\
&\varepsilon\delta_{y} \left|(\Delta (S+ E_{2}), bG)_{\Gamma_{y,xy}}\right|
\le
C\varepsilon^{1/4} \delta_{y}(\ln N)
\vert\vert\vert G \vert\vert\vert\label{Gamma y xy}.
\end{align}
\par
Considering \eqref{eq:(2.1b)} and $\mbox{meas}(\Omega\cap\Omega^{\prime}_{0})\le C\sigma_{y}\ln N$, we obtain
 \begin{align}\label{secondorder S G}
 \left|
 \varepsilon\left((\Delta S)_{x},\delta bG\right)_{\Omega\cap\Omega^{\prime}_{0}}
 \right|
 &\le
 C\varepsilon N^{-1}\Vert (\Delta S)_{x} \Vert_{L^{\infty}(\Omega\cap\Omega^{\prime}_{0})}
\Vert G \Vert_{L^{1}(\Omega\cap\Omega^{\prime}_{0})}\\
 &\le
 C\varepsilon N^{-1}(\sigma_{y}\ln N)^{1/2}
 \Vert G \Vert_{\Omega\cap\Omega^{\prime}_{0}}\nonumber\\
&\le
C\varepsilon N^{-5/4}(\ln^{1/2} N) \vert\vert\vert G \vert\vert\vert.\nonumber
 \end{align}
Assumption \ref{Assumption 1} and the inverse inequality \eqref{eq:(3.5)} yield
\begin{align}\label{secondorder E2 G Omegas}
\left|\varepsilon((\Delta E_{2})_{x},\delta G)
_{\Omega_{s}\cap\Omega^{\prime}_{0}}\right|
&\le
 C\varepsilon \delta_{s}
 \Vert (\Delta E_{2})_{x} \Vert_{L^{1}(\Omega_{s}\cap\Omega^{\prime}_{0})}
 \Vert G \Vert_{L^{\infty}(\Omega_{s}\cap\Omega^{\prime}_{0})}\\
 &\le
 C\varepsilon \delta_{s} \cdot\varepsilon^{-1/2}N^{-\rho}\cdot (H_{x}H_{y})^{-1/2}
 \Vert G \Vert_{\Omega_{s}\cap\Omega^{\prime}_{0}}\nonumber\\
&\le
\varepsilon^{1/2}N^{-\rho}
\cdot\vert\vert\vert G \vert\vert\vert.\nonumber
 \end{align}
In view of \eqref{eq:(2.1d)}, we get
\begin{align}\label{secondorder E2 G Omegay}
\left|\varepsilon\delta_{y}((\Delta E_{2})_{x},G)_{\Omega_{2}\cap\Omega^{\prime}_{0}}\right|
&\le
C\varepsilon\delta_{y}\Vert (\Delta E_{2})_{x}\Vert_{\Omega_{2}}
\Vert G \Vert_{\Omega_{2}}\\
&\le
C\varepsilon^{1/4}\delta_{y}\cdot \vert\vert\vert G \vert\vert\vert.\nonumber
\end{align}
Lemma \ref{local estimates}, Assumption \ref{Assumption 1} and the inverse inequality \eqref{eq:(3.5)} yield
\begin{align}\label{secondorder E1E3 S}
 &\left|
 \varepsilon\left(\Delta (E_{1}+E_{12}),\delta bG_{x}\right)_{\Omega_{s}\cap\Omega^{\prime}_{0}}
 \right|\\
\le &
 C\varepsilon \delta_{s}
 \Vert \Delta (E_{1}+E_{12}) \Vert_{L^{1}(\Omega_{s}\cap\Omega^{\prime}_{0})}
 \Vert G_{x} \Vert_{L^{\infty}(\Omega_{s}\cap\Omega^{\prime}_{0})}\nonumber\\
\le &
 C\varepsilon \delta_{s} \cdot\varepsilon^{-1}N^{-\rho}\sigma_{y}\ln N\cdot (H_{x}H_{y})^{-1/2}
 \Vert G_{x} \Vert_{\Omega_{s}\cap\Omega^{\prime}_{0}}\nonumber\\
\le&
CN^{-\rho}(\ln N)\vert\vert\vert G \vert\vert\vert\nonumber
 \end{align}
and
 \begin{align}
 &\left|
 \varepsilon\left(\Delta (E_{1}+E_{12}),\delta bG_{x}\right)_{\Omega_{2}\cap\Omega^{\prime}_{0}} \right|\\
\le &
 C\varepsilon \delta_{y} \Vert \Delta (E_{1}+E_{12}) \Vert_{L^{1}(\Omega_{2}\cap\Omega^{\prime}_{0})}
 \Vert G_{x} \Vert_{L^{\infty}(\Omega_{2}\cap\Omega^{\prime}_{0})}\nonumber\\
\le &
 C\varepsilon \delta_{y} \cdot\varepsilon^{-1/2} N^{-\rho} (\ln N)\cdot (H_{x}h_{y})^{-1/2}
 \Vert G_{x} \Vert_{\Omega_{2}\cap\Omega^{\prime}_{0}}\nonumber\\
\le&
C\varepsilon^{1/4}N^{1/2-\rho}(\ln^{1/2} N)
\vert\vert\vert G \vert\vert\vert.\nonumber
 \end{align}
 \par
Considering \eqref{eq:(2.1b)}---\eqref{eq:(2.1e)} and Theorem \ref{pointGreen} with $\upsilon=1$, we obtain
 \begin{align}\label{Omega c}
 \left|
 \varepsilon\left((\Delta S)_{x},\delta bG\right)_{\Omega\setminus\Omega^{\prime}_{0}}
 \right|
 &\le
 C\varepsilon\delta
 \Vert (\Delta S)_{x} \Vert_{L^{1}(\Omega\setminus\Omega^{\prime}_{0})}
 \Vert G \Vert_{L^{\infty}(\Omega\setminus\Omega^{\prime}_{0})}\\
 &\le
 CN^{-2}.\nonumber
 \end{align}
Similar argument shows
\begin{align}
 \left|\varepsilon((\Delta E_{2})_{x},\delta G)
_{\Omega\setminus\Omega^{\prime}_{0}}\right|
&\le CN^{-2},\\
\left| \varepsilon\left(\Delta (E_{1}+E_{12}),\delta bG_{x}\right)_{\Omega\setminus\Omega^{\prime}_{0}} \right|
&\le CN^{-2}\label{secondorder E1E3 G end}.
 \end{align}
We deduce the last inequality by analyzing separately for
$\Omega_{s}\setminus\Omega^{\prime}_{0}$
and $\Omega_{2}\setminus\Omega^{\prime}_{0}$.
\par
If $\Omega^{\prime}_{0}\subset\Omega_{s}\cup\Omega_{1}$, we can get $\Omega_{2}\cap\Omega^{\prime}_{0}=\varnothing$. Then
\begin{align}
\left|\varepsilon\delta_{y}((\Delta E_{2})_{x},G)_{\Omega_{2}\cap\Omega^{\prime}_{0}}\right|=0,\\
\left|
 \varepsilon\left(\Delta (E_{1}+E_{12}),\delta bG_{x}\right)_{\Omega_{2}\cap\Omega^{\prime}_{0}} \right|=0.
\end{align}
In this case, the modification of \eqref{Gamma y xy} is that
\begin{align}\label{Gamma y xy modified}
&\varepsilon\delta_{y} (|\Delta S|+|\Delta E_{2}|, |bG|
)_{\Gamma_{y,xy}}\\
\le&
C\varepsilon\delta_{y} \Vert \Delta(S+E_{2})\Vert_{L^{1}(\Gamma_{y,xy})}
\Vert G \Vert_{L^{\infty}(\Gamma_{y,xy})}\nonumber\\
\le&
C\varepsilon^{1/2}\delta_{y}
\Vert G \Vert_{L^{\infty}(\Omega_{2}\setminus\Omega^{\prime}_{0})}\nonumber\\
\le&
C N^{-2}\nonumber
\end{align}
where we have used Theorem \ref{pointGreen} with $\upsilon=1$.
Collecting \eqref{Gamma s x S}, \eqref{Gamma s x E2}, \eqref{secondorder S G}, \eqref{secondorder E2 G Omegas}, \eqref{secondorder E1E3 S} and \eqref{Omega c}--\eqref{Gamma y xy modified}, we have
\begin{equation*}
\left|\left(\varepsilon\Delta u,\delta b G_{x}\right)
\right|\le
C(N^{-\rho}+\varepsilon N^{-5/4})(\ln N) \vert\vert\vert G \vert\vert\vert.
\end{equation*}
If $\Omega^{\prime}_{0}\not\subset\Omega_{s}\cup\Omega_{1}$,
using \eqref{Gamma s x S}--\eqref{secondorder E1E3 G end}
leads to
\begin{equation*}
\left|\left(\varepsilon\Delta u,\delta b G_{x}\right)
\right|\le
C(N^{-\rho}+\varepsilon^{1/4}\delta_{y})(\ln N) \vert\vert\vert G \vert\vert\vert.
\end{equation*}
\end{proof}

\bibliographystyle{plain}
\bibliography{alex-FE-paper,alex-FE-book}
\end{document}